\numberwithin{equation}{section}
\numberwithin{figure}{section}
\theoremstyle{plain}
\newtheorem{thm}{\protect\theoremname}[section]
\theoremstyle{remark}
\newtheorem{rem}[thm]{\protect\remarkname}
\theoremstyle{plain}
\newtheorem{lem}[thm]{\protect\lemmaname}
\theoremstyle{plain}
\newtheorem{prop}[thm]{\protect\propositionname}
\theoremstyle{definition}
\providecommand{\examplename}{Example}
\providecommand{\lemmaname}{Lemma}
\providecommand{\propositionname}{Proposition}
\providecommand{\remarkname}{Remark}
\providecommand{\theoremname}{Theorem}
\newcommand{\R}{{\mathbb R}}
\begin{document}
\title{Landis-type conjecture for the half-Laplacian}
\author{Pu-Zhao Kow}
\address{Department of Mathematics and Statistics, P.O. Box 35 (MaD), FI-40014 University of Jyv\"{a}skyl\"{a}, Finland.}
\email{pu-zhao.pz.kow@jyu.fi}
\author{Jenn-Nan Wang}
\address{Institute of Applied Mathematical Sciences, NCTS, National Taiwan University, Taipei 106, Taiwan.}
\email{jnwang@math.ntu.edu.tw}
\begin{abstract}
In this paper, we study the Landis-type conjecture, i.e., unique continuation
property from infinity, of the fractional Schr\"{o}dinger equation
with drift and potential terms. We show that if any solution of the equation
decays at a certain exponential rate, then it must be trivial. The
main ingredients of our proof are the Caffarelli-Silvestre extension
and Armitage's Liouville-type theorem. 
\end{abstract}

\keywords{Unique continuation property, Landis conjecture, half-Laplacian, Caffarelli-Silvestre
extension, Liouville-type theorem}
\subjclass[2020]{Primary: 35A02, 35B40, 35R11. Secondary: 35J05, 35J15}
\maketitle

\begin{sloppypar}

\section{Introduction}

In this paper, we consider the following equation with the half Laplacian
\begin{equation}
(-\Delta)^{\frac{1}{2}}u+{\bf b}({\bf x})\cdot\nabla u+q({\bf x})u=0\quad\text{in}\;\;\mathbb{R}^{n},\label{eq:Schrodinger}
\end{equation}
where $n\ge 1$. Our aim is to investigate the minimal decay rate of nontrivial solutions of \eqref{eq:Schrodinger}. In other words, we consider
the unique continuation property from infinity of \eqref{eq:Schrodinger}.
This problem is closely related to the conjecture proposed by Landis
in the 60's \cite{KL88}. Landis conjectured that, if $u$ is a solution
to the classical Schr\"{o}dinger equation
\begin{equation}
-\Delta u+q({\bf x})u=0\quad\text{in}\;\;\mathbb{R}^{n},\label{eq:Schrodinger-classical}
\end{equation}
with a bounded potential $q$, satisfying the decay estimate 
\[
|u({\bf x})|\le\exp(-C|{\bf x}|^{1+}),
\]
then $u\equiv0$. Landis' conjecture was disproved by Meshkov \cite{Meshkov92}, who constructed a \emph{complex-valued}
potential $q\in L^{\infty}(\mathbb{R}^{n})$ and a nontrivial solution
$u$ of \eqref{eq:Schrodinger-classical} such that 
\[
|u({\bf x})|\le\exp(-C|{\bf x}|^{\frac{4}{3}}).
\]
In the same work, Meshkov showed that if
\begin{equation*}
|u({\bf x})|\le\exp(-C|{\bf x}|^{\frac{4}{3}+}),
\end{equation*}
then $u\equiv 0$. Based on a suitable Carleman estimate, a quantitative version of Meshkov's result was established in \cite{BK05quantitativeLandis}, see also \cite{Cruz-Sampedro99,Davey14magneticSch,DZ18Landis,DZ19Landis,KL19LandisNS,LUW11LandisNS,LW14Landis} for related results. We also refer to \cite[Theorem~2]{Zhu18LandisHighOrder} for some decay estimates at infinity for higher order elliptic equations.

In view of Meshkov's example, Kenig modified Landis' original conjecture and asked that whether the Landis'
conjecture holds true for \emph{real-valued} potentials $q$ in \cite{kenig06realLandis}.
The real version of Landis' conjecture in the plane was resolved
recently in \cite{LMNNrealLandis}. We also refer to \cite{Davey20realLandis,DKW17realLandis,DKW20realLandis,KSW15realLandis} for the early development of the real version of Landis' conjecture. 

For the fractional Schr\"{o}dinger equation, the Landis-type conjecture was studied in \cite{RW19Landis}. The main theme of this paper
is to extend the results in \cite{RW19Landis} to the fractional Schr\"{o}dinger equation with the half Laplacian \eqref{eq:Schrodinger}. Previously, the authors in \cite{KWLandisDrift} proved some partial results for the fractional Schr\"odinger equation
\begin{equation}
((-\Delta)^{s}+b({\bf x}){\bf x}\cdot\nabla + q({\bf x}))u=0\quad\text{in}\;\;\mathbb{R}^{n},\label{eq:sch}
\end{equation}
where $s\in(0,1)$ and $b$, $q$ are scalar-valued functions.
The main tools used in \cite{KWLandisDrift} are the Caffarelli-Silvestre extension and the Carleman estimate. The particular form of the drift coefficient in \eqref{eq:sch}
is due to the applicability of the Carleman estimate. It turns out when $s=\frac 12$, i.e., the case of half Laplacian, we can treat a general vector-valued drift coefficient ${\bf b}({\bf x})$ in \eqref{eq:Schrodinger}. The underlying reason is that the Caffarelli-Silvestre extension solution of $(-\Delta)^{\frac{1}{2}}u=0$ in $\R^n$ is a harmonic function in  ${\mathbb R}_+^{n+1}$.  Inspired by this observation, we show
that if both ${\bf b}$ and $q$ are differentiable, then any nontrivial
solution of \eqref{eq:Schrodinger} can not decay exponentially
at infinity. The detailed statement is described in the following
theorem. 
\begin{thm}
\label{thm:main1} 
\begin{subequations}
Assume that there exists a constant $\Lambda>0$ such that 
\begin{equation}
\|q\|_{L^{\infty}(\mathbb{R}^{n})}+\|\nabla q\|_{L^{\infty}(\mathbb{R}^{n})}+\|\nabla{\bf b}\|_{L^{\infty}(\mathbb{R}^{n})}\le\Lambda\label{eq:assumption-1}
\end{equation}
and, furthermore, there exists an $\epsilon>0$, depending only on $n$, such that 
\begin{equation}
\|{\bf b}\|_{L^{\infty}(\mathbb{R}^{n})}\le\epsilon.\label{eq:assumption-2}
\end{equation}
Let $u \in W^{2,p}(\mathbb{R}^{n})$ for some integer $p>n$ be a solution to \eqref{eq:Schrodinger} such that 
\begin{equation}
|u({\bf x})|\le\Lambda e^{-\lambda|{\bf x}|}\label{eq:decay-assumption}
\end{equation}
for some $\lambda>0$, then $u\equiv0$. 
\end{subequations}
\end{thm}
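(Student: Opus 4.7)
The plan is to lift equation \eqref{eq:Schrodinger} to a boundary value problem for the Laplacian in one higher dimension via the Caffarelli--Silvestre extension, and then to invoke a Liouville-type theorem of Armitage to force the resulting harmonic extension, and hence $u$ itself, to vanish. Concretely, I would let $U:\mathbb{R}^{n+1}_{+}\to\mathbb{R}$ be the Poisson extension of $u$, so that $\Delta U=0$ in $\mathbb{R}^{n+1}_{+}$ and $U(\mathbf{x},0)=u(\mathbf{x})$. Since $s=\frac{1}{2}$, the CS extension carries no degenerate weight, and the identity $-\partial_{y}U(\mathbf{x},0)=(-\Delta)^{1/2}u(\mathbf{x})$ recasts \eqref{eq:Schrodinger} as a Robin-type boundary condition
\[
\partial_{y}U(\mathbf{x},0)=\mathbf{b}(\mathbf{x})\cdot\nabla_{\mathbf{x}}U(\mathbf{x},0)+q(\mathbf{x})U(\mathbf{x},0).
\]

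Next, I would transfer the exponential decay of $u$ into matching decay of both the Dirichlet and Neumann traces of $U$. Since $u\in W^{2,p}$ with $p>n$, Sobolev embedding gives $u\in C^{1,\alpha}$, and bootstrapping from \eqref{eq:Schrodinger} together with the coefficient bounds \eqref{eq:assumption-1} allows one to propagate the pointwise decay \eqref{eq:decay-assumption} of $u$ to an analogous exponential decay of $\nabla u$. The smallness assumption \eqref{eq:assumption-2} on $\|\mathbf{b}\|_{\infty}$ then ensures that the right-hand side of the Robin condition, namely $\mathbf{b}\cdot\nabla u+qu$, also decays like $Ce^{-\lambda'|\mathbf{x}|}$ for some $\lambda'$ comparable to $\lambda$. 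Standard Poisson-kernel estimates additionally yield pointwise control on $U$ inside the upper half space.

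With these preparations, Armitage's Liouville-type theorem for harmonic functions on $\mathbb{R}^{n+1}_{+}$ --- asserting, in essence, that a harmonic function whose Dirichlet and Neumann data on the boundary decay sufficiently fast must vanish identically --- will give $U\equiv 0$, and therefore $u\equiv 0$. The hard part will be matching the available estimates for the CS extension to the precise hypotheses of Armitage's theorem: while both boundary traces decay exponentially in $|\mathbf{x}|$, the Poisson extension of a rapidly decaying boundary datum typically decays only polynomially in the $y$-direction, so the decay hypotheses of Armitage must be compatible with this anisotropy. Carefully tracking how much decay is lost through the Poisson integral, and how much is absorbed by the drift term $\mathbf{b}\cdot\nabla u$ in the Robin condition, is what should dictate how small the constant $\epsilon$ in \eqref{eq:assumption-2} needs to be taken.
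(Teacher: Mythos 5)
Your overall strategy---Caffarelli--Silvestre extension followed by Armitage's Liouville-type theorem---coincides with the paper's, but there are two genuine gaps, and the second is the heart of the matter.

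First, your route to the decay of $\nabla u$ does not work as stated. You propose to ``propagate the pointwise decay \eqref{eq:decay-assumption} of $u$ to an analogous exponential decay of $\nabla u$'' by bootstrapping from \eqref{eq:Schrodinger}, but the equation contains the nonlocal operator $(-\Delta)^{\frac{1}{2}}$, so local elliptic bootstrapping gives no pointwise bound on $\nabla u(\mathbf{x})$ in terms of data near $\mathbf{x}$. What the paper actually establishes (Lemma~\ref{lem:gradient-decay}) is the weaker---but sufficient---\emph{weighted integral} decay $\int e^{\frac{\lambda}{2}|\mathbf{x}|}|(-\Delta)^{\frac12}u|^{2}+\int e^{\frac{\lambda}{2}|\mathbf{x}|}|(-\Delta)^{\frac12}u|^{p}<\infty$, obtained from the $L^{p}$-boundedness of the Riesz transform (this is precisely where the smallness \eqref{eq:assumption-2} is used, to absorb the drift term), global $W^{2,p}$ estimates via \eqref{eq:Schrodinger-differentiate}, and integration by parts against the weight $e^{\frac{\lambda}{2}|\mathbf{x}|}$.

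Second, and more seriously, you misread what Armitage's theorem (Theorem~\ref{thm:Liouville}) requires: it demands exponential decay of the harmonic extension \emph{throughout the bulk}, $|\tilde{u}(\mathbf{x})|\le Ce^{-c|\mathbf{x}|}$ for all $\mathbf{x}\in\mathbb{R}_{+}^{n+1}$ as in \eqref{eq:bulk-decay}, not decay of the Dirichlet and Neumann traces on $\mathbb{R}^{n}\times\{0\}$. You correctly sense the danger at the end---a Poisson extension of a decaying boundary datum generally decays only polynomially in the vertical variable---but you frame the fix as possibly weakening the hypotheses of the Liouville theorem. The paper does the opposite: it \emph{proves} the strong bulk decay \eqref{eq:bulk-decay}. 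This is Proposition~\ref{prop:boundary-bulk}, whose engine is the propagation-of-smallness / three-ball inequality \eqref{eq:boundary-bulk1} for the CS extension, adapted from \cite{RW19Landis}, with its right-hand side controlled by the boundary decay of $u$ and by the weighted bounds on $(-\Delta)^{\frac12}u$ from Lemma~\ref{lem:gradient-decay}, followed by a chain-of-balls argument. Without this boundary-to-bulk step, the reduction to Armitage's theorem is not achieved, and your sketch stops exactly where the real work begins.
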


\begin{rem}
Note that both $(-\Delta)^{\frac{1}{2}}u$
and $\nabla u$ are first orders. In view of the $L^{p}$ estimate of the Riesz transform \eqref{eq:boundedness-Riesz-transform1}, \eqref{eq:boundedness-Riesz-transform2},
the assumption \eqref{eq:assumption-2} and the regularity requirement of $u$ are imposed to ensure that the non-local operator $(-\Delta)^{\frac{1}{2}}u$ is the dominated term in \eqref{eq:Schrodinger}.
\end{rem}

It is interesting to compare Theorem~\ref{thm:main1} with \cite[Theorem 1]{RW19Landis}.
Assume that $u\in H^{s}(\mathbb{R}^{n})$ is a solution to 
\begin{equation}
(-\Delta)^{s}u+q({\bf x})u=0\quad\text{in}\,\,\mathbb{R}^{n}\label{eq:fractional-Sch}
\end{equation}
such that $|q({\bf x})|\le1$ and $|{\bf x}\cdot\nabla q({\bf x})|\le1$. If
\begin{equation}\label{tx}
\int_{\mathbb{R}^{n}}e^{|{\bf x}|^{\alpha}}|u|^{2}\,\mathsf{d}{\bf x}<\infty\quad\text{for some}\,\,\alpha>1,
\end{equation}
then $u\equiv0$. Therefore, for $s=\frac 12$,  Theorem~\ref{thm:main1} extends
their results by slightly relaxing the condition on $q$ and also adding a drift term. Another key improvement is that the exponential decay rate $e^{-\lambda|{\bf x}|}$ is sharper than \eqref{tx}.

The proof of Theorem~\ref{thm:main1} consists of two steps. Inspired by \cite{RW19Landis}, we first pass the boundary decay \eqref{eq:decay-assumption} to the
bulk decay of the Caffarelli-Silvestre extension solution (harmonic function) in the extended
space $\mathbb{R}^{n}\times(0,\infty)$. In the second step, we apply the Liouville-type
theorem (Theorem~\ref{thm:Liouville}) to the harmonic function. It is noted that we do not use any Carleman
estimate here. On the other hand, using the harmonic function in the unit ball ${v}_{0}({\bf z}):=\Re(e^{-1/{\bf z}^{\alpha}})$, ${\bf z}\in{\mathbb C}$, $0<\alpha<1$  (see \cite{Jin93UCP}), it is not difficult to construct an example to show the optimality of the
Liouville-type theorem. In view of this example, we believe that the decay assumption \eqref{eq:decay-assumption} is optimal. 

When ${\bf b}\equiv 0$, the following theorem can be found in \cite[Theorem 1.1.9]{Kow21dissertation},
which was obtained using similar ideas as in the proof of Theorem~\ref{thm:main1}. 
\begin{thm}
\label{thm:main2}Let $q\in L^{\infty}(\mathbb{R}^{n})$
(not necessarily differentiable) satisfy 
\[
\|q\|_{L^{\infty}(\mathbb{R}^{n})}\le\Lambda.
\]
If $u \in H^{\frac{1}{2}}(\mathbb{R}^{n})$ is a solution to \eqref{eq:Schrodinger}
with ${\bf b}\equiv0$ such that 
\begin{equation}\label{cx}
\int_{\mathbb{R}^{n}}e^{|{\bf x}|}|u|^{2}\,\mathsf{d}{\bf x}<\infty,
\end{equation}
then $u\equiv0$. 
\end{thm}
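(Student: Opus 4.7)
\noindent\textbf{Proof proposal for Theorem~\ref{thm:main2}.}
The plan is to imitate the two-step argument used for Theorem~\ref{thm:main1}: first transfer the weighted $L^{2}$ decay \eqref{cx} from the boundary $\mathbb{R}^{n}$ into a bulk decay of the Caffarelli--Silvestre extension of $u$ on the half-space $\mathbb{R}^{n+1}_{+}$, and then invoke Armitage's Liouville-type theorem (Theorem~\ref{thm:Liouville}) to conclude that the extension, and hence $u$ itself, vanishes identically. Because the drift term is absent, one never needs to differentiate the equation, which is why no regularity of $q$ beyond $L^{\infty}$ is required.

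Concretely, I would set $U({\bf x}, y) := (P_{y} \ast u)({\bf x})$, where $P_{y}$ denotes the half-space Poisson kernel. Then $U$ is harmonic in $\mathbb{R}^{n+1}_{+}$, $U(\cdot, 0) = u$ in the $H^{1/2}(\mathbb{R}^{n})$ trace sense, and, using $\widehat{U}(\xi, y) = e^{-y|\xi|}\widehat{u}(\xi)$ together with $\widehat{(-\Delta)^{1/2}u}(\xi) = |\xi|\widehat{u}(\xi)$, one has $\partial_{y} U|_{y=0^{+}} = -(-\Delta)^{1/2}u = qu$ by \eqref{eq:Schrodinger} with ${\bf b}\equiv 0$. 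The Neumann datum $qu$ inherits the weighted integrability \eqref{cx} from $u$ because $q\in L^{\infty}$.

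The heart of the argument is to show that this boundary integrability propagates into the bulk, in the form
\begin{equation*}
\int_{\mathbb{R}^{n+1}_{+}} \omega(X)\, |U(X)|^{2} \, \mathsf{d}X < \infty
\end{equation*}
for a weight $\omega(X)$ compatible with the hypotheses of Theorem~\ref{thm:Liouville} (morally $\omega(X)\sim e^{\mu|X|}$ for some $\mu>0$). A natural route is to split the Poisson convolution $P_{y}\ast u$ according to whether $|{\bf x}'|\le|{\bf x}|/2$ or $|{\bf x}'|>|{\bf x}|/2$: the polynomial decay of $P_{y}$ handles the near field, while Cauchy--Schwarz against the weight $e^{|{\bf x}'|}$ in the far field harvests an exponential factor from \eqref{cx}; the region of small $y$ is then controlled separately by a trace or energy identity. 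Once such a bulk decay is secured, Theorem~\ref{thm:Liouville} forces $U\equiv 0$ on $\mathbb{R}^{n+1}_{+}$, and restricting to $y=0$ concludes $u\equiv 0$.

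The main obstacle will be this boundary-to-bulk transfer. For Theorem~\ref{thm:main1} the pointwise exponential bound $|u({\bf x})|\le\Lambda e^{-\lambda|{\bf x}|}$ propagates to $U$ painlessly via harmonic comparison, whereas here only the $L^{2}$-weighted condition \eqref{cx} is available, and the Poisson kernel is only polynomially localized. Matching these two requires care, especially near $\{y=0\}$ where $P_{y}$ approximates a Dirac mass; the delicate accounting is how much of the exponential weight survives the convolution uniformly in $y$.
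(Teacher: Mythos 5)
Your overall strategy matches the paper's: pass to the Caffarelli--Silvestre extension, transfer boundary decay to bulk decay, then invoke Armitage's Liouville theorem. However, the mechanism you propose for the boundary-to-bulk step contains a genuine gap, and the remark that the pointwise version ``propagates to $U$ painlessly via harmonic comparison'' misreads how Proposition~\ref{prop:boundary-bulk} actually works.

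The problem: the Poisson integral of an exponentially decaying boundary datum does \emph{not} decay exponentially in the half-space. Take $u = \chi_{B_1'}$; then for $|{\bf x}|$ large one has
\begin{equation*}
U({\bf x},y) = c_n \int_{|{\bf z}|<1} \frac{y}{\left(|{\bf x}-{\bf z}|^2 + y^2\right)^{(n+1)/2}}\, \mathsf{d}{\bf z} \;\approx\; c_n |B_1'| \, \frac{y}{\left(|{\bf x}|^2 + y^2\right)^{(n+1)/2}},
\end{equation*}
which is only polynomial decay. Precisely your ``near field'' region $|{\bf z}|\le |{\bf x}|/2$ produces this contribution: the Poisson kernel there is $\lesssim y(|{\bf x}|^2+y^2)^{-(n+1)/2}$, and Cauchy--Schwarz against \eqref{cx} over $B_{|{\bf x}|/2}'$ only yields a polynomially large volume factor, not an exponential gain. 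So the convolution $P_y * u$ alone cannot deliver \eqref{eq:bulk-decay}, no matter how the far field is handled, and the conclusion of Theorem~\ref{thm:Liouville} would not be triggered.

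What actually saves the argument, in the paper and in \cite[Proposition~2.2]{RW19Landis}, is that one has \emph{both} the Dirichlet trace $u$ \emph{and} the Neumann trace $-(-\Delta)^{1/2}u = qu$ decaying exponentially; this is genuine Cauchy data for the harmonic $\tilde u$. Exponential smallness of the full Cauchy data on large boundary balls, combined with a merely polynomial a priori bound on $\tilde u$ (via \eqref{eq:extension-problem-est}), is pushed into the bulk by the quantitative unique-continuation / three-ball estimate \eqref{eq:recall-RW19-eqn19} and the boundary Moser estimate \eqref{eq:Schauder-estimate}: the H\"older interpolation $A^{\alpha} B^{1-\alpha}$, with $A$ polynomial and $B$ exponentially small, remains exponentially small. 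You correctly observe that the Neumann datum inherits the weighted integrability (since $q\in L^\infty$), but your argument never exploits that fact quantitatively -- it only uses the Dirichlet trace in the convolution -- and that is exactly the missing ingredient. Once you replace the Poisson-kernel decomposition by this propagation-of-smallness step (which is precisely what makes the proof go through with only $q\in L^\infty$ and ${\bf b}\equiv 0$, without differentiating the equation), the remainder of the plan is sound. A final minor point: Theorem~\ref{thm:Liouville} requires a pointwise bound $|\tilde u({\bf x})|\le C e^{-c|{\bf x}|}$, not a weighted $L^2$ bound, so one also needs the $L^\infty$ upgrade supplied by \eqref{eq:Schauder-estimate}.
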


\begin{rem}
Theorem~\ref{thm:main2} is an immediate consequence of \cite[Proposition~2.2]{RW19Landis} and Theorem~\ref{thm:Liouville} (without using Proposition~\ref{prop:boundary-bulk}). Therefore we only need \eqref{cx}, namely, \eqref{eq:decay-assumption} is unnecessary when ${\bf b} \equiv 0$.
\end{rem}

It is interesting to compare this result with \cite[Theorem 2]{RW19Landis}. There, it was proved that
if $u\in H^{s}(\mathbb{R}^{n})$ solves \eqref{eq:fractional-Sch} with $|q(x)|\le 1$ and 
\begin{equation}\label{txx}
\int_{\mathbb{R}^{n}}e^{|{\bf x}|^{\alpha}}|u|^{2}\,\mathsf{d}{\bf x}<\infty\quad\text{for some}\,\,\alpha>\frac{4s}{4s-1},
\end{equation}
then $u\equiv0$. When $s=\frac 12$, \eqref{txx} becomes
\[
\int_{\mathbb{R}^{n}}e^{|{\bf x}|^{\alpha}}|u|^{2}\,\mathsf{d}{\bf x}<\infty
\]
for $\alpha >2$, which is clearly stronger than \eqref{cx}. On the other hand, Theorem~\ref{thm:main2} holds regardless whether $q$ is real-valued or complex-valued.

This paper is organized as follows. In Section~\ref{sec:Decay-gradient},
we will study the decaying behavior of $\nabla u$. In Section~\ref{sec:Caffarelli-Silvestre-extension},
we localize the nonlocal operator $(-\Delta)^{\frac{1}{2}}$ by
the Caffarelli-Silvestre extension. In Section~\ref{sec:Some-estimates-CS},
we derive some useful estimates about the Caffarelli-Silvestre
extension $\tilde{u}$ of the solution $u$, which is harmonic. In Section~\ref{sec:Boundary-Bulk},
we obtain the decay rate of $\tilde{u}$ from that of $u$. Finally, 
we prove Theorem~\ref{thm:main1} in Section~\ref{sec:main} by Armitage's Liouville-type theorem. Furthermore, we provide another proof
of this Liouville-type theorem in Appendix~\ref{sec:Appendix1}.

\section{\label{sec:Decay-gradient}Decay of the gradient}

Let $1<p<\infty$. For each $u\in L^{p}(\mathbb{R}^{n})$, let
$\psi$ satsify $(-\Delta)^{\frac{1}{2}}\psi=u$ and let ${\bf u}:=\nabla\psi$.
Using the $L^{p}$-boundedness of the Riesz transform \cite{stein2016singular}
(see also \cite{BG13Riesz}), we can show that 
\begin{equation}
\|{\bf u}\|_{L^{p}(\mathbb{R}^{n})}\le C(n,p)\|u\|_{L^{p}(\mathbb{R}^{n})}.\label{eq:boundedness-Riesz-transform1}
\end{equation}
We remark that this estimate is also used in the proof of \cite[Theorem 2.1]{CCW01criticalQG}.  Note that we can formally write ${\bf u}=\nabla(-\Delta)^{-\frac{1}{2}}u$. Plugging $(-\Delta)^{\frac{1}{2}}\psi=u$ and ${\bf u}=\nabla\psi$
into \eqref{eq:boundedness-Riesz-transform1} implies 
\begin{equation}
\|\nabla\psi\|_{L^{p}(\mathbb{R}^{n})}\le C(n,p)\|(-\Delta)^{\frac{1}{2}}\psi\|_{L^{p}(\mathbb{R}^{n})}.\label{eq:boundedness-Riesz-transform2}
\end{equation}
Thanks to \eqref{eq:boundedness-Riesz-transform2}, we can obtain the following lemma. 
\begin{lem}
\label{lem:gradient-decay}Let $2 \le p < \infty$ be an integer. Assume that \eqref{eq:assumption-1} and \eqref{eq:assumption-2} hold. 
Let $u \in W^{2,p}(\mathbb{R}^{n})$ be a solution to \eqref{eq:Schrodinger} such that the decay assumption \eqref{eq:decay-assumption} holds,
then $(-\Delta)^{\frac{1}{2}}u$ satisfies 
\begin{equation}
\int_{\mathbb{R}^{n}}e^{\frac{\lambda}{2}|{\bf x}|}|(-\Delta)^{\frac{1}{2}}u|^{2}\,\mathsf{d}{\bf x}+\int_{\mathbb{R}^{n}}e^{\frac{\lambda}{2}|{\bf x}|} |(-\Delta)^{\frac{1}{2}}u|^{p} \,\mathsf{d}{\bf x}\le C\label{eq:gradient-decay}
\end{equation}
for some positive constant $C=C(n,p,\lambda,\Lambda)$. 
\end{lem}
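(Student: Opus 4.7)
The plan is to exploit \eqref{eq:Schrodinger} to write
\[
(-\Delta)^{\frac{1}{2}}u = -{\bf b}\cdot\nabla u - qu,
\]
so that the weighted bounds on $(-\Delta)^{\frac{1}{2}}u$ follow once weighted $L^m$ bounds on $\nabla u$ and on $u$ are established for $m\in\{2,p\}$. The $u$ contribution is handled by \eqref{eq:decay-assumption} alone: $\int_{\R^n} e^{\lambda|{\bf x}|/2}|u|^m\,\mathsf{d}{\bf x} \le \Lambda^m \int_{\R^n} e^{(\frac{1}{2}-m)\lambda|{\bf x}|}\,\mathsf{d}{\bf x} < \infty$, so everything reduces to showing that $\nabla u$ lies in $L^m(e^{\lambda|{\bf x}|/2}\,\mathsf{d}{\bf x})$.

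First, I would obtain the unweighted bound $\|\nabla u\|_{L^p(\R^n)}\le C\|u\|_{L^p(\R^n)}$. Applying \eqref{eq:boundedness-Riesz-transform2} with $\psi=u$ and then substituting \eqref{eq:Schrodinger} gives
\[
\|\nabla u\|_{L^p(\R^n)} \le C(n,p)\|(-\Delta)^{\frac{1}{2}}u\|_{L^p(\R^n)} \le C(n,p)\,\epsilon\,\|\nabla u\|_{L^p(\R^n)} + C(n,p)\Lambda\|u\|_{L^p(\R^n)}.
\]
Choosing $\epsilon>0$ small enough that $C(n,p)\,\epsilon\le\tfrac12$ (this fixes the constant $\epsilon$ in \eqref{eq:assumption-2}), the first term is absorbed and $\|\nabla u\|_{L^p(\R^n)}\le 2C(n,p)\Lambda\|u\|_{L^p(\R^n)}<\infty$, finiteness coming from \eqref{eq:decay-assumption}. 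The $L^2$ case is treated identically.

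To upgrade to the weighted estimate I would choose a smooth positive weight $\phi$ with $\phi({\bf x})\asymp e^{\lambda|{\bf x}|/(2m)}$ at infinity and $|\nabla\phi|\le C\phi$, and set $w:=\phi u$. A direct computation yields
\[
(-\Delta)^{\frac{1}{2}} w + {\bf b}\cdot\nabla w + qw = {\bf b}\cdot(\nabla\phi)u + \bigl[(-\Delta)^{\frac{1}{2}},\phi\bigr]u =: F.
\]
Repeating the Riesz-plus-absorption argument for $w$ would give $\|\nabla w\|_{L^m(\R^n)} \le C(\|w\|_{L^m(\R^n)}+\|F\|_{L^m(\R^n)})$. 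The norm $\|w\|_{L^m}^m = \int \phi^m|u|^m\,\mathsf{d}{\bf x} = \int e^{\lambda|{\bf x}|/2}|u|^m\,\mathsf{d}{\bf x}$ is finite, and the first piece of $F$ is pointwise dominated by $\epsilon C\Lambda e^{-(1-\frac{1}{2m})\lambda|{\bf x}|}$. Since $\phi\nabla u = \nabla w - (\nabla\phi)u$, the bound $\|\phi\nabla u\|_{L^m(\R^n)}<\infty$ then transfers to the desired weighted $L^m$ estimate on $\nabla u$, and hence on $(-\Delta)^{\frac{1}{2}}u$ via the reduction at the outset.

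The principal obstacle I expect is the $L^m$-estimate of the commutator $\bigl[(-\Delta)^{\frac{1}{2}},\phi\bigr]u$: because $\phi$ grows exponentially, the standard Calder\'on commutator bounds (which require $\nabla\phi\in L^\infty$) do not apply. I would treat it by the pointwise representation
\[
\bigl[(-\Delta)^{\frac{1}{2}},\phi\bigr]u({\bf x}) = c_n\,\mathrm{p.v.}\!\int_{\R^n}\frac{\phi({\bf x})-\phi({\bf y})}{|{\bf x}-{\bf y}|^{n+1}}\,u({\bf y})\,\mathsf{d}{\bf y},
\]
splitting into the near region $|{\bf y}-{\bf x}|\le 1$, where $|\phi({\bf x})-\phi({\bf y})|\le C|{\bf x}-{\bf y}|\phi({\bf x})$ absorbs a power of the singular kernel, and the far region $|{\bf y}-{\bf x}|>1$, where the genuine exponential decay of $u({\bf y})$ outpaces the growth of $\phi$.
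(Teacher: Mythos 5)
Your overall reduction is sound---use the equation to trade $(-\Delta)^{1/2}u$ for ${\bf b}\cdot\nabla u+qu$, get the unweighted $\|\nabla u\|_{L^p}$ bound by Riesz plus absorption (this matches the paper exactly), and then somehow upgrade to the exponentially weighted bound on $\nabla u$. The divergence is in how you propose to do the upgrade, and there is a genuine gap there.

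You conjugate by $\phi\asymp e^{\lambda|{\bf x}|/(2m)}$ and run the Riesz argument for $w=\phi u$, which forces you to control the commutator $[(-\Delta)^{1/2},\phi]u$ in $L^m$. Your proposed near-field estimate does \emph{not} close as stated: with $|\phi({\bf x})-\phi({\bf y})|\le C|{\bf x}-{\bf y}|\phi({\bf x})$ (which is the correct first-order bound for an exponential weight), the kernel becomes $\sim\phi({\bf x})/|{\bf x}-{\bf y}|^{n}$, and $\int_{|{\bf y}-{\bf x}|\le 1}|{\bf x}-{\bf y}|^{-n}\,\mathsf{d}{\bf y}$ diverges logarithmically in $\mathbb{R}^n$. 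The first-order Taylor term does not ``absorb a power of the singular kernel''; to salvage the near region you would have to exploit the odd symmetry of $\nabla\phi({\bf x})\cdot({\bf y}-{\bf x})$ against the principal-value kernel, together with either a second-order Taylor expansion of $\phi$ or the difference $u({\bf y})-u({\bf x})$ combined with Lipschitz control on $u$. This is repairable but is where all the real work would be, and as written the key step is wrong, not merely unproven.

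The paper sidesteps commutators entirely by a more elementary device. Having the unweighted bounds $\|\nabla u\|_{L^p}$ \emph{and} $\|\nabla^2 u\|_{L^p}$ (the latter you never establish, but it comes from the same Riesz-plus-absorption argument applied to the differentiated equation, which is why \eqref{eq:assumption-1} asks for $\nabla{\bf b},\nabla q\in L^\infty$), one writes
\begin{equation*}
\int_{\mathbb{R}^n}e^{\frac{\lambda}{2}|{\bf x}|}(\partial_j u)^2\,\mathsf{d}{\bf x}
= -\int_{\mathbb{R}^n}\partial_j\Bigl(e^{\frac{\lambda}{2}|{\bf x}|}\partial_j u\Bigr)\,u\,\mathsf{d}{\bf x},
\end{equation*}
and the crucial point is that after integrating by parts a bare factor of $u$ appears, so the pointwise hypothesis $|u|\le\Lambda e^{-\lambda|{\bf x}|}$ turns the weight $e^{\frac{\lambda}{2}|{\bf x}|}$ into $e^{-\frac{\lambda}{2}|{\bf x}|}$, and H\"older against the already-known unweighted $L^p$ norms of $\partial_j u$ and $\partial_j^2 u$ finishes the estimate. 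The same integration-by-parts trick, pushed to the $p$-th power (with an odd/even $p$ case split), gives the weighted $L^p$ bound. No Kelvin-type conjugation, no commutator with a growing weight, no singular-integral kernel analysis. Your approach, if the commutator lemma were supplied, would be a valid alternative route, but it is substantially more delicate; the integration-by-parts argument is the reason the paper can afford to assume only pointwise decay of $u$ and one derivative of ${\bf b},q$.
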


\begin{proof}
We first estimate the $L^{p}$-norm of $\nabla u$. Taking $L^{p}$-norm on \eqref{eq:Schrodinger} and using \eqref{eq:boundedness-Riesz-transform2}, \eqref{eq:assumption-2}, \eqref{eq:decay-assumption}, we have
\begin{align*}
\|\nabla u\|_{L^{p}(\mathbb{R}^{n})} & \le C(n,p)\|(-\Delta)^{\frac{1}{2}}u\|_{L^{p}(\mathbb{R}^{n})}\\
 & \le C(n,p)\bigg( \|{\bf b}\|_{L^{\infty}(\mathbb{R}^{n})} \|\nabla u\|_{L^{p}(\mathbb{R}^{n})} + \|q\|_{L^{\infty}(\mathbb{R}^{n})}\|u\|_{L^{p}(\mathbb{R}^{n})}\bigg)\\
 & \le\epsilon C(n,p) \|\nabla u\|_{L^{p}(\mathbb{R}^{n})} + C(n,p,\lambda,\Lambda).
\end{align*}
Choosing $\epsilon=(2C(n,p))^{-1}$ in the estimate above gives
\begin{equation}
\|\nabla u\|_{L^{p}(\mathbb{R}^{n})}\le C(n,p,\lambda,\Lambda).\label{eq:Lp-gradient-bound}
\end{equation}
 
Next, we estimate the $L^{p}$-norm of $\nabla^{2}u$. Differentiating \eqref{eq:Schrodinger} yields
\begin{equation}
(-\Delta)^{\frac{1}{2}}\partial_{j}u+{\bf b}({\bf x})\cdot\nabla(\partial_{j}u)+\partial_{j}{\bf b}({\bf x})\cdot\nabla u+q({\bf x})\partial_{j}u+\partial_{j}q({\bf x})u=0\label{eq:Schrodinger-differentiate}
\end{equation}
for each $j=1,\cdots,n$. Taking the $L^{p}$-norm of \eqref{eq:Schrodinger-differentiate},
we have 
\begin{align*}
\|\nabla(\partial_{j}u)\|_{L^{p}(\mathbb{R}^{n})} & \le C(n,p)\|(-\Delta)^{\frac{1}{2}}\partial_{j}u\|_{L^{p}(\mathbb{R}^{n})}\\
 & \le C(n,p)\bigg( \|{\bf b}\|_{L^{\infty}(\mathbb{R}^{n})}\|\nabla(\partial_{j}u)\|_{L^{p}(\mathbb{R}^{n})} + \|\nabla{\bf b}\|_{L^{\infty}(\mathbb{R}^{n})}\|\nabla u\|_{L^{p}(\mathbb{R}^{n})}\\
 & \quad+\|q\|_{L^{\infty}(\mathbb{R}^{n})}\|\nabla u\|_{L^{p}(\mathbb{R}^{n})} + \|\nabla q\|_{L^{\infty}(\mathbb{R}^{n})} \|u\|_{L^{p}(\mathbb{R}^{n})}\bigg)\\
 & \le C(n,p) \bigg(\epsilon \|\nabla(\partial_{j}u)\|_{L^{p}(\mathbb{R}^{n})} + \Lambda \|\nabla u\|_{L^{p}(\mathbb{R}^{n})} + \Lambda \|u\|_{L^{p}(\mathbb{R}^{n})}\bigg)\\
 & \le\frac{1}{2} \|\nabla(\partial_{j} u)\|_{L^{p}(\mathbb{R}^{n})} + C(n,p,\lambda,\Lambda),
\end{align*}
and hence
\begin{equation}
\|\nabla^{2}u\|_{L^{p}(\mathbb{R}^{n})}\le C(n,p,\lambda,\Lambda).\label{eq:Lq-hessian-bound}
\end{equation}
Hence, it follows from the Sobolev embedding that $\|\nabla u\|_{L^\infty(\R^n)}\le C(n,p,\lambda,\Lambda)$.

Now we would like to derive the $L^{2}$-decay of $\nabla u$. Combining \eqref{eq:Lp-gradient-bound} and \eqref{eq:Lq-hessian-bound}, it is easy to see that 
\begin{align*}
 & \int_{\mathbb{R}^{n}}e^{\frac{\lambda}{2}|{\bf x}|}|\partial_{j}u|^{2}\,\mathsf{d}{\bf x}=\int_{\mathbb{R}^{n}}e^{\frac{\lambda}{2}|{\bf x}|}(\partial_{j}u)(\partial_{j}u)\,\mathsf{d}{\bf x}\\
 & =-\frac{\lambda}{2}\int_{\mathbb{R}^{n}}\frac{x_{j}}{|{\bf x}|}e^{\frac{\lambda}{2}|{\bf x}|}u\partial_{j}u\,\mathsf{d}{\bf x}-\int_{\mathbb{R}^{n}}e^{\frac{\lambda}{2}|{\bf x}|}(\partial_{j}^{2}u)u\,\mathsf{d}{\bf x}\\
 & \le\frac{\lambda}{2}\int_{\mathbb{R}^{n}}e^{\frac{\lambda}{2}|{\bf x}|}|u||\partial_{j}u|\,\mathsf{d}{\bf x}+\int_{\mathbb{R}^{n}}e^{\frac{\lambda}{2}|{\bf x}|}|u||\partial_{j}^{2}u|\,\mathsf{d}{\bf x}\\
 & \le\frac{\lambda\Lambda}{2}\int_{\mathbb{R}^{n}}e^{-\frac{\lambda}{2}|{\bf x}|}|\partial_{j}u|\,\mathsf{d}{\bf x}+\Lambda\int_{\mathbb{R}^{n}}e^{-\frac{\lambda}{2}|{\bf x}|}|\partial_{j}^{2}u|\,\mathsf{d}{\bf x}\quad\text{(by \eqref{eq:decay-assumption})}\\
 & \le\frac{\lambda\Lambda}{2}\bigg(\int_{\mathbb{R}^{n}}e^{-\frac{\lambda}{2}p'|{\bf x}|}\,\mathsf{d}{\bf x}\bigg)^{\frac{1}{p'}}\|\partial_{j}u\|_{L^{p}(\mathbb{R}^{n})} + \Lambda\bigg(\int_{\mathbb{R}^{n}}e^{-\frac{\lambda}{2}p'|{\bf x}|}\,\mathsf{d}{\bf x}\bigg)^{\frac{1}{p'}}\|\partial_{j}^{2}u\|_{L^{p}(\mathbb{R}^{n})}\\
 & \le C(n,p,\lambda,\Lambda) \quad(\mbox{where $p'$ is the conjugate exponent of $p$}),
\end{align*}
that is, we obtain 
\begin{equation}
\int_{\mathbb{R}^{n}}e^{\frac{\lambda}{2}|{\bf x}|}|\nabla u|^{2}\,\mathsf{d}{\bf x}\le C(n,p,\lambda,\Lambda).\label{eq:L2-decay-gradient}
\end{equation}

We now continue to obtain the $L^{2}$-decay of $(-\Delta)^{\frac{1}{2}}u$. In view of \eqref{eq:Schrodinger} and using \eqref{eq:assumption-1}, \eqref{eq:assumption-2}, \eqref{eq:decay-assumption}, \eqref{eq:L2-decay-gradient}, we have
\begin{align}
 & \int_{\mathbb{R}^{n}}e^{\frac{\lambda}{2}|{\bf x}|}|(-\Delta)^{\frac{1}{2}}u|^{2}\,\mathsf{d}{\bf x}\nonumber \\
 & \le\int_{\mathbb{R}^{n}}e^{\frac{\lambda}{2}|{\bf x}|}|{\bf b}({\bf x})\cdot\nabla u|^{2}\,\mathsf{d}{\bf x}+\int_{\mathbb{R}^{n}}e^{\frac{\lambda}{2}|{\bf x}|}|q({\bf x})u|^{2}\,\mathsf{d}{\bf x}\nonumber \\
 & \le \|{\bf b}\|_{L^{\infty}(\mathbb{R}^{n})}^{2}\int_{\mathbb{R}^{n}}e^{\frac{\lambda}{2}|{\bf x}|}|\nabla u|^{2}\,\mathsf{d}{\bf x}+\|q\|_{L^{\infty}(\mathbb{R}^{n})}^{2}\int_{\mathbb{R}^{n}}e^{\frac{\lambda}{2}|{\bf x}|}|u|^{2}\,\mathsf{d}{\bf x}\nonumber \\
 & \le C(n,\lambda,\Lambda).\label{eq:L2-decay-half-Lap}
\end{align}
Here we may choose a smaller $\epsilon$ if necessary. 

Our next task is to derive the $L^{p}$-decay of $\nabla u$. First of all, let $p$ be odd. We then have
\begin{align}
 & \int_{\mathbb{R}^{n}}e^{\frac{\lambda}{2}|{\bf x}|}|\partial_{j}u|^{p}\,\mathsf{d}{\bf x} = \int_{\mathbb{R}^{n}}e^{\frac{\lambda}{2}|{\bf x}|}|\partial_{j}u|(\partial_{j}u)^{p-1}\,\mathsf{d}{\bf x}\nonumber \\
 & = \int_{\{\partial_{j}u\neq0\}}e^{\frac{\lambda}{2}|{\bf x}|}|\partial_{j}u|(\partial_{j}u)^{p-2}(\partial_{j}u)\,\mathsf{d}{\bf x}\nonumber \\
 & =-\frac{\lambda}{2}\int_{\mathbb{R}^{n}}\frac{x_{j}}{|{\bf x}|}e^{\frac{\lambda}{2}|{\bf x}|}|\partial_{j}u|(\partial_{j}u)^{p-2}u\,\mathsf{d}{\bf x} - \int_{\{\partial_{j}u\neq0\}} e^{\frac{\lambda}{2}|{\bf x}|}\frac{\partial_{j}u}{|\partial_{j}u|}(\partial_{j}^{2}u)(\partial_{j}u)^{p-2}u\,\mathsf{d}{\bf x}\nonumber \\
 & \quad -(p-2) \int_{\mathbb{R}^{n}}e^{\frac{\lambda}{2}|{\bf x}|}|\partial_{j}u|(\partial_{j}u)^{p-3}(\partial_{j}^{2}u)u\,\mathsf{d}{\bf x}\nonumber \\
 & \le\frac{\lambda\Lambda}{2}\int_{\mathbb{R}^{n}}e^{-\frac{\lambda}{2}|{\bf x}|}|\partial_{j}u|^{p-1}\,\mathsf{d}{\bf x} + \Lambda(p-1)\int_{\mathbb{R}^{n}}e^{-\frac{\lambda}{2}|{\bf x}|}|\partial_{j}u|^{p-2}|\partial_{j}^{2}u|\,\mathsf{d}{\bf x}\quad\text{(by \eqref{eq:decay-assumption})}\nonumber \\
 & \le C(n,p,\lambda,\Lambda) + \Lambda(p-1) \int_{\mathbb{R}^{n}}e^{-\frac{\lambda}{2}|{\bf x}|}|\partial_{j}u|^{p-2}|\partial_{j}^{2}u|\,\mathsf{d}{\bf x}\quad\text{(using \eqref{eq:Lp-gradient-bound})}.\label{eq:Lr-decay-gradient1}
\end{align}
Note that 
\begin{align*}
 & \int_{\mathbb{R}^{n}}e^{-\frac{\lambda}{2}|{\bf x}|}|\partial_{j}u|^{p-2}|\partial_{j}^{2}u|\,\mathsf{d}{\bf x}\\
 & \le\bigg(\int_{\mathbb{R}^{n}}e^{-\frac{\lambda}{2}r_{1}|{\bf x}|}\,\mathsf{d}{\bf x}\bigg)^{\frac{1}{r_{1}}}\bigg(\int_{\mathbb{R}^{n}}|\partial_{j}u|^{r_{2}(p-2)}\,\mathsf{d}{\bf x}\bigg)^{\frac{1}{r_{2}}}\bigg(\int_{\mathbb{R}^{n}}|\partial_{j}^{2}u|^{r_{3}}\,\mathsf{d}{\bf x}\bigg)^{\frac{1}{r_{3}}},
\end{align*}
where $1<r_{1},r_{2},r_{3}<\infty$ satisfy 
\[
\frac{1}{r_{1}}+\frac{1}{r_{2}}+\frac{1}{r_{3}}=1.
\]
Since we consider odd $p \ge 3$, we can choose $r_{1}=p$, $r_{2}=\frac{p}{p-2}$ and $r_{3}=p$. Hence, we obtain from \eqref{eq:Lp-gradient-bound} and \eqref{eq:Lq-hessian-bound} that
\begin{equation}
\int_{\mathbb{R}^{n}}e^{-\frac{\lambda}{2}|{\bf x}|}|\partial_{j}u|^{p-2}|\partial_{j}^{2}u|\,\mathsf{d}{\bf x}\le C(n,p,\lambda,\Lambda).\label{eq:Lr-decay-gradient2}
\end{equation}
Combining \eqref{eq:Lr-decay-gradient1} and \eqref{eq:Lr-decay-gradient2} gives
\begin{equation}
\int_{\mathbb{R}^{n}}e^{\frac{\lambda}{2}|{\bf x}|}|\nabla u|^{p}\,\mathsf{d}{\bf x}\le C(n,p,\lambda,\Lambda).\label{eq:Lr-decay-gradient}
\end{equation}
When $p$ is even, estimate \eqref{eq:Lr-decay-gradient} follows from the same argument above by noting $|\partial_ju|^{p}=(\partial_ju)^{p}$.

Finally, we estimate the $L^{p}$-decay of $(-\Delta)^{\frac{1}{2}}u$. Using the equation \eqref{eq:Schrodinger} and by \eqref{eq:assumption-1}, \eqref{eq:assumption-2}, \eqref{eq:decay-assumption}, \eqref{eq:Lr-decay-gradient}), we have 
\begin{align}
 & \int_{\mathbb{R}^{n}}e^{\frac{\lambda}{2}|{\bf x}|}|(-\Delta)^{\frac{1}{2}}u|^{p}\,\mathsf{d}{\bf x}\nonumber \\
 & \le C\bigg(\int_{\mathbb{R}^{n}}e^{\frac{\lambda}{2}|{\bf x}|}|{\bf b}({\bf x})\cdot\nabla u|^{p}\,\mathsf{d}{\bf x} + \int_{\mathbb{R}^{n}}e^{\frac{\lambda}{2}|{\bf x}|}|q({\bf x})u|^{p}\,\mathsf{d}{\bf x}\bigg)\nonumber \\
 & \le C\bigg( \|{\bf b}\|_{L^{\infty}(\mathbb{R}^{n})}^{p} \int_{\mathbb{R}^{n}}e^{\frac{\lambda}{2}|{\bf x}|}|\nabla u|^{p}\,\mathsf{d}{\bf x} + \|q\|_{L^{\infty}(\mathbb{R}^{n})}^{p} \int_{\mathbb{R}^{n}}e^{\frac{\lambda}{2}|{\bf x}|}|u|^{p}\,\mathsf{d}{\bf x}\bigg)\nonumber \\
 & \le C(n,p,\lambda,\Lambda).\label{eq:Lr-decay-half-Lap}
\end{align}
Consequently, \eqref{eq:gradient-decay} is a direct consequence of \eqref{eq:L2-decay-half-Lap} and \eqref{eq:Lr-decay-half-Lap}.
\end{proof}

\section{\label{sec:Caffarelli-Silvestre-extension}Caffarelli-Silvestre extension}

In this section, we briefly discuss the Caffarelli-Silvestre extension \cite{CS07extension}. We also refer to \cite[Appendix~A]{GR19unique} for higher order fractional Laplacian.

Let $\mathbb{R}_{+}^{n+1}:=\mathbb{R}^{n}\times\mathbb{R}_{+}=\begin{Bmatrix}\begin{array}{l|l}
{\bf x}=({\bf x}',x_{n+1}) & {\bf x}'\in\mathbb{R}^{n},x_{n+1}>0\end{array}\end{Bmatrix}$ and ${\bf x}_{0}=({\bf x}',0)\in\mathbb{R}^{n}\times\{0\}$. For
$R>0$, we denote 
\begin{align*}
B_{R}^{+}({\bf x}_{0}) & :=\begin{Bmatrix}\begin{array}{l|l}
{\bf x}\in\mathbb{R}_{+}^{n+1} & |{\bf x}-{\bf x}_{0}|\le R\end{array}\end{Bmatrix},\\
B_{R}'({\bf x}_{0}) & :=\begin{Bmatrix}\begin{array}{l|l}
{\bf x}\in\mathbb{R}^{n}\times\{0\} & |{\bf x}-{\bf x}_{0}|\le R\end{array}\end{Bmatrix}.
\end{align*}
To simplify the notations, we also denote $B_{R}^{+}:=B_{R}^{+}(0)$
and $B_{R}':=B_{R}'(0)$. We define two Sobolev spaces
\begin{align*}
\dot{H}^{1}(\mathbb{R}_{+}^{n+1}) & :=\begin{Bmatrix}\begin{array}{l|l}
v:\mathbb{R}_{+}^{n+1}\rightarrow\mathbb{R} & \int_{\mathbb{R}_{+}^{n+1}}|\nabla v|^{2}\,\mathsf{d}{\bf x}<\infty\end{array}\end{Bmatrix},\\
H_{{\rm loc}}^{1}(\mathbb{R}_{+}^{n+1}) & :=\begin{Bmatrix}\begin{array}{l|l}
v\in\dot{H}^{1}(\mathbb{R}_{+}^{n+1}) & \int_{\mathbb{R}^{n}\times(0,r)}|v|^{2}\,\mathsf{d}{\bf x}<\infty\text{ for some constant }r>0\end{array}\end{Bmatrix}.
\end{align*}
 
Given any $\mu \in \mathbb{R}$ and $u\in H^{\mu}(\mathbb{R}^{n})$. Following from \cite[Lemma~A.1]{GR19unique}, there exists $\tilde{u} \in \mathcal{C}^{\infty}(\mathbb{R}_{+}^{n+1})$ such that
\begin{equation}
\begin{cases}
\Delta\tilde{u}=0  \quad \text{in}\;\;\,\mathbb{R}_{+}^{n+1}, \\
\displaystyle{\lim_{x_{n+1} \rightarrow 0}} \| \tilde{u}(\cdot,x_{n+1}) - u \|_{H^{\mu}(\mathbb{R}^{n})} = 0, 
\end{cases}\label{eq:extension-problem1}
\end{equation}
where $\nabla=(\nabla',\partial_{n+1})=(\partial_{1},\cdots,\partial_{n},\partial_{n+1})$, and the half Laplacian is equivalent to the Dirichlet-to-Neumann map of the extension
problem \eqref{eq:extension-problem1}: 
\begin{equation}
\lim_{x_{n+1}\rightarrow0} \|\partial_{n+1}\tilde{u}(\cdot ,x_{n+1}) +(-\Delta)^{\frac{1}{2}}u \|_{H^{\mu - 1}(\mathbb{R}^{n})} =0 \label{eq:fractional-Lap-DNmap}
\end{equation}
(see \cite[(A.3)]{GR19unique}). In particular, when $\mu = \frac{1}{2}$, it follows from \cite[Corollary~A.2]{GR19unique} that 
\[
\| \tilde{u} \|_{\dot{H}^{1}(\mathbb{R}_{+}^{n+1})} \le C \| u \|_{H^{\frac{1}{2}}(\mathbb{R}^{n})} \quad \text{for some positive constant }C.
\]
In view of this observation, if $u \in H^{1}(\mathbb{R}^{n})$ and both ${\bf b},q$ are bounded, we can reformulate \eqref{eq:Schrodinger}
as the following local elliptic equation: 
\begin{equation}
\begin{cases}
\Delta\tilde{u}=0 & \text{in}\;\;\mathbb{R}_{+}^{n+1},\\
\tilde{u}({\bf x}',0)=u({\bf x}') & \text{on}\;\;\mathbb{R}^{n} \quad (\text{in}\;\; H^{1}(\mathbb{R}^{n})\text{-sense}),\\
{\displaystyle \lim_{x_{n+1}\rightarrow0}}\partial_{n+1}\tilde{u}({\bf x})={\bf b}({\bf x}')\cdot\nabla'u+q({\bf x}')u & \text{on}\;\;\mathbb{R}^{n} \quad (\text{in}\;\; L^{2}(\mathbb{R}^{n})\text{-sense}).
\end{cases}\label{eq:sch-localized}
\end{equation}
Since $u \in H^{1}(\mathbb{R}^{n}) \equiv {\rm dom} \, ((-\Delta)^{\frac{1}{2}})$, from \cite[page 48--49]{Sti10}, we have that $\tilde{u}\in H_{{\rm loc}}^{1}(\mathbb{R}_{+}^{n+1})$ and 
\begin{equation}
	\|\tilde{u}(\bullet,x_{n+1})\|_{L^{2}(\mathbb{R}^{n})}\le\|u\|_{L^{2}(\mathbb{R}^{n})}.\label{eq:extension-problem-est}
\end{equation}

\section{\label{sec:Some-estimates-CS}Some estimates related to the extension
problem }

The following lemma is a special case of \cite[Equation~(19)]{RW19Landis} (see also \cite[Lemma~3.2]{KWLandisDrift}). 
\begin{lem}
Let $\tilde{u}\in \mathcal{C}^{\infty}(\mathbb{R}_{+}^{n+1})$ be a solution
to \eqref{eq:extension-problem1}. Then the following estimate holds
for any ${\bf x}_{0}\in\mathbb{R}^{n}\times\{0\}$: 
\begin{align}
\|\tilde{u}\|_{L^{2}(B_{{c}R}^{+}({\bf x}_0))}\le& C\bigg(\|\tilde{u}\|_{L^{2}(B_{16R}^{+}({\bf x}_0))}+R^{\frac{1}{2}}\|u\|_{L^{2}(B_{16R}'({\bf x}_0))}\bigg)^{\alpha}\nonumber\\
&\times\bigg(R^{\frac 32}\bigg\|\lim_{x_{n+1}\rightarrow0}\partial_{n+1}\tilde{u}\bigg\|_{L^{2}(B_{16R}'({\bf x}_0))}+R^{\frac 12}\|u\|_{L^{2}(B_{16R}'({\bf x}_0))}\bigg)^{1-\alpha}\label{eq:recall-RW19-eqn19}
\end{align}
for some positive constants $C=C(n)$, $\alpha=\alpha(n)\in(0,1)$
and ${c}={c}(n)\in(0,1)$, all of them are independent of $R$ and ${\bf x}_{0}$. 
\end{lem}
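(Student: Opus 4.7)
The inequality is translation- and scale-invariant: setting $\tilde u_R({\bf x}):=\tilde u({\bf x}_0+R{\bf x})$, one checks that $\tilde u_R$ is harmonic on $\mathbb{R}_+^{n+1}$, its Dirichlet trace is $u_R({\bf x}'):=u({\bf x}_0+R{\bf x}')$, and its Neumann trace on $\{x_{n+1}=0\}$ equals $R(\partial_{n+1}\tilde u)({\bf x}_0+R{\bf x}',0)$. A change of variables in the $L^2$-norms then produces exactly the factor $R^{1/2}$ in front of $\|u\|_{L^2(B_{16R}')}$ and $R^{3/2}$ in front of the Neumann trace displayed in the statement. Hence it suffices to prove the inequality for ${\bf x}_0=0$ and $R=1$.

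At unit scale, I would establish the three-ball interpolation for the harmonic function $\tilde u$ in $B_{16}^+$ by means of a Carleman estimate for $\Delta$ in the half-ball with a radial weight $\varphi(r)$, $r=|{\bf x}|$, of the shape
\[
\tau^{3}\int_{B_{16}^{+}}e^{2\tau\varphi}|v|^{2}\,\mathsf{d}{\bf x} \le C\int_{B_{16}^{+}}e^{2\tau\varphi}|\Delta v|^{2}\,\mathsf{d}{\bf x}+C(\text{boundary terms on }\{x_{n+1}=0\}),
\]
valid for $v$ vanishing near the spherical part of $\partial B_{16}^+$. Applying this to $v=\chi\tilde u$ with $\chi\in\mathcal{C}^\infty_c(B_{16})$ equal to $1$ on $B_8$, and using harmonicity, the bulk error $|\Delta(\chi\tilde u)|\lesssim|\nabla\chi||\nabla\tilde u|+|\Delta\chi||\tilde u|$ is supported in an annulus away from $0$ where the Carleman weight is bounded, while the boundary contribution is precisely controlled by $\|u\|_{L^2(B_{16}')}$ and $\|\partial_{n+1}\tilde u(\cdot,0)\|_{L^2(B_{16}')}$. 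A Caccioppoli inequality converts the gradient term $\|\nabla\tilde u\|_{L^2(\text{annulus})}$ into $\|\tilde u\|_{L^2(B_{16}^+)}$ plus the same boundary data.

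Setting $\mathcal A:=\|\tilde u\|_{L^2(B_{16}^+)}+\|u\|_{L^2(B_{16}')}$ and $\mathcal B:=\|\partial_{n+1}\tilde u(\cdot,0)\|_{L^2(B_{16}')}+\|u\|_{L^2(B_{16}')}$, the Carleman estimate rearranges into
\[
\|\tilde u\|_{L^2(B_c^+)} \le C\bigl(e^{-\beta\tau}\mathcal A + e^{\gamma\tau}\mathcal B\bigr)
\]
for some $\beta,\gamma>0$ and all $\tau\ge \tau_0$, the first term accounting for the bulk annular contribution (where $\varphi$ is smaller than at the center) and the second for the boundary data (where $\varphi$ is larger). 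Optimizing in $\tau$---choosing $\tau\sim(\beta+\gamma)^{-1}\log(\mathcal A/\mathcal B)$ when $\mathcal A\ge e^{\beta\tau_0}\mathcal B$ and $\tau=\tau_0$ otherwise---yields $\|\tilde u\|_{L^2(B_c^+)}\le C\mathcal A^{\alpha}\mathcal B^{1-\alpha}$ with $\alpha=\gamma/(\beta+\gamma)\in(0,1)$. The main obstacle is the Carleman estimate itself: the weight $\varphi$ must be selected so that both boundary terms $\int_{B_{16}'}e^{2\tau\varphi}|\partial_{n+1}v|^2$ and $\int_{B_{16}'}e^{2\tau\varphi}|v|^2$ appear with controllable signs after integration by parts, without generating uncontrolled integrals on the spherical part of $\partial B_{16}^+$. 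Since this is precisely the content of \cite[Equation~(19)]{RW19Landis} (cf.\ also \cite[Lemma~3.2]{KWLandisDrift}) specialized to the harmonic extension problem, I would simply invoke that inequality rather than rederive it.
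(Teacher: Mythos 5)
Your proof ultimately defers to \cite[Equation~(19)]{RW19Landis}, which is exactly what the paper does: the lemma is simply declared to be ``a special case'' of that inequality, with no further argument given. Your scaling reduction to ${\bf x}_0=0$, $R=1$ is correct and cleanly reproduces the $R^{1/2}$ and $R^{3/2}$ prefactors, and your Carleman sketch faithfully describes the mechanism behind the cited three-ball estimate, so your account is consistent with---and somewhat more explanatory than---the paper's one-line citation.
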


By choosing $\sigma=\frac{1}{2}$, $\nu=2$ and $a({\bf x}')\equiv0$
in \cite[Proposition 2.6(i)]{JLX14}, we obtain the following version
of De Giorgi-Nash-Moser type theorem. 
\begin{lem}
Let $\tilde{u}\in \mathcal{C}^{\infty}(\mathbb{R}_{+}^{n+1})$ be satisfy \eqref{eq:extension-problem1} and $p>n$. There exists a
constant $C=C(n,p)>0$ such that 
\begin{equation}
\|\tilde{u}\|_{L^{\infty}(B_{\frac{1}{4}}^{+})}\le C\bigg[\|\tilde{u}\|_{L^{2}(B_{1}^{+})}+\bigg\|\lim_{x_{n+1}\rightarrow0}\partial_{n+1}\tilde{u}\bigg\|_{L^{p}(B_{1}')}\bigg].\label{eq:Schauder-estimate}
\end{equation}
\end{lem}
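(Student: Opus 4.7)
The plan is to invoke \cite[Proposition~2.6(i)]{JLX14} directly with the parameters $\sigma = \frac{1}{2}$, $\nu = 2$, and $a(\mathbf{x}') \equiv 0$, as indicated in the statement. For context, the underlying proof is a Moser iteration scheme adapted to a half-ball with Neumann-type boundary data, and I would outline the main structure of that argument to make the paper self-contained.

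First, I would set up nested half-balls $B_{r_{1}}^{+} \subset B_{r_{2}}^{+} \subset B_{1}^{+}$ with $\frac{1}{4} \le r_{1} < r_{2} \le 1$ and standard cutoffs $\eta$ supported in $B_{r_{2}}^{+}$ with $\eta \equiv 1$ on $B_{r_{1}}^{+}$. For $\beta \ge 2$, I would test the harmonic equation $\Delta \tilde{u} = 0$ against $\varphi = \eta^{2} |\tilde{u}|^{\beta - 2} \tilde{u}$. Integration by parts produces a bulk Dirichlet-energy contribution together with a boundary contribution
\[
-\int_{B_{1}'} \eta^{2} |\tilde{u}|^{\beta-2}\tilde{u}(\mathbf{x}',0) \cdot \lim_{x_{n+1}\to 0}\partial_{n+1}\tilde{u}(\mathbf{x}')\,\mathsf{d}\mathbf{x}',
\]
reflecting that the outward normal at $\{x_{n+1} = 0\}$ is $-\partial_{n+1}$. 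Rearranging as usual, this yields a Caccioppoli-type inequality for $w = \eta\,|\tilde{u}|^{\beta/2}$ with an extra boundary error controlled by the Neumann datum $g := \lim_{x_{n+1}\to 0}\partial_{n+1}\tilde{u}$.

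The boundary error is estimated by H\"older's inequality, $\|\eta^{2}|\tilde{u}|^{\beta-1}\|_{L^{p/(p-1)}(B_{1}')}\cdot \|g\|_{L^{p}(B_{1}')}$, and then by the trace embedding $H^{1}(B_{1}^{+}) \hookrightarrow L^{2(n)/(n-1)}(B_{1}')$ combined with Young's inequality, which absorbs a fraction of the Dirichlet energy into the left-hand side. Coupled with the Sobolev embedding $H^{1}(B_{1}^{+}) \hookrightarrow L^{2(n+1)/(n-1)}(B_{1}^{+})$, this produces a reverse-H\"older-type self-improvement of the form
\[
\|\tilde{u}\|_{L^{\chi\beta}(B_{r_{1}}^{+})} \le C(n,p,\beta,r_{2}-r_{1})\bigl(\|\tilde{u}\|_{L^{\beta}(B_{r_{2}}^{+})} + \|g\|_{L^{p}(B_{1}')}\bigr),
\]
with exponent gain $\chi = (n+1)/(n-1) > 1$. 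Iterating over a telescoping sequence of radii $r_{k} \downarrow \frac{1}{4}$ and exponents $\beta_{k} = 2\chi^{k}$, and passing $k \to \infty$, yields the claimed $L^{\infty}$ bound on $B_{1/4}^{+}$.

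The main obstacle is ensuring that the boundary term is summable across the iteration. This is exactly where the hypothesis $p > n$ intervenes: it gives $p/(p-1) < n/(n-1)$, so the H\"older pairing with $g$ fits strictly inside the range allowed by the trace embedding, leaving enough room for Young's inequality to absorb a gradient factor and for the geometric exponents $\chi^{k}$ to drive the iteration. Replacing the $L^{p}$ norm of $g$ by an $L^{r}$ norm with $r \le n$ would break this arithmetic and force a different, weaker estimate.
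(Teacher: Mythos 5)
Your proof takes essentially the same route as the paper, which obtains this lemma purely by citing \cite[Proposition~2.6(i)]{JLX14} with $\sigma=\tfrac12$, $\nu=2$, $a\equiv0$ and offers no further argument. Your supplementary Moser-iteration sketch is a faithful reconstruction of the mechanism behind that reference (the only small omission is the standard device of iterating on $|\tilde u|+\|g\|_{L^p(B_1')}$ rather than $|\tilde u|$ alone so that the Neumann datum enters linearly at each step rather than with growing powers), and the role you assign to the hypothesis $p>n$ is correct.
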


Combining \eqref{eq:recall-RW19-eqn19} and \eqref{eq:Schauder-estimate},
together with some suitable scaling, we can obtain the following lemma. 
\begin{lem}
Let $\tilde{u}\in \mathcal{C}^{\infty}(\mathbb{R}_{+}^{n+1})$ be a solution to
 \eqref{eq:extension-problem1} and $p>n$. Then the following inequality
holds for all ${\bf x}_{0}\in\mathbb{R}^{n}\times\{0\}$ and $R\ge1$:
\begin{align}
\|\tilde{u}\|_{L^{\infty}(B_{cR}^{+}({\bf x}_{0}))}& \le C\bigg(\|\tilde{u}\|_{L^{2}(B_{16R}^{+}({\bf x}_{0}))}+R^{\frac{1}{2}}\|u\|_{L^{2}(B_{16R}'({\bf x}_{0}))}\bigg)^{\alpha}\nonumber \\
 & \quad\times\bigg(R^{\frac{3}{2}}\|(-\Delta)^{\frac{1}{2}}u\|_{L^{2}(B_{16R}'({\bf x}_{0}))}+R^{\frac{1}{2}}\|u\|_{L^{2}(B_{16R}'({\bf x}_{0}))}\bigg)^{1-\alpha}\nonumber \\
 & \quad+R^{\frac{3}{2}}\|(-\Delta)^{\frac{1}{2}}u\|_{L^{p}(B_{R}'({\bf x}_{0}))}\label{eq:boundary-bulk1}
\end{align}
for some positive constants $C=C(n,p)$, $\alpha=\alpha(n)\in(0,1)$
and ${c}={c}(n)\in(0,1)$, all of them are independent
of $R$ and ${\bf x}_{0}$. 
\end{lem}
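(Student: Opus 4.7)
The plan is to combine a rescaled form of \eqref{eq:Schauder-estimate} with the three-ball interpolation \eqref{eq:recall-RW19-eqn19}. I would first use the Schauder-type inequality to control $\|\tilde{u}\|_{L^\infty(B_{cR}^+({\bf x}_0))}$ by an $L^2$ norm of $\tilde{u}$ on a larger half-ball plus the $L^p$ boundary norm of $(-\Delta)^{1/2}u$, and then feed the resulting bulk $L^2$ term into \eqref{eq:recall-RW19-eqn19}. The constants fit together if one takes $c := c_0/4$, where $c_0=c_0(n)\in(0,1)$ is the constant from \eqref{eq:recall-RW19-eqn19}, so that the $L^2$ bulk norm produced by the rescaling matches exactly the left-hand side of \eqref{eq:recall-RW19-eqn19} with free radius $R$.

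For step one I would set $v({\bf y}) := \tilde{u}({\bf x}_0 + c_0 R {\bf y})$ on $B_1^+$. Then $v$ is harmonic, and by the chain rule together with \eqref{eq:fractional-Lap-DNmap} it has Neumann data $\lim_{y_{n+1}\to 0}\partial_{n+1}v({\bf y}',0) = -c_0R\,(-\Delta)^{1/2}u({\bf x}_0+c_0R{\bf y}')$. Applying \eqref{eq:Schauder-estimate} to $v$ and undoing the change of variables (whose Jacobians produce a factor $(c_0R)^{-(n+1)/2}$ on the $L^2$ term and $(c_0R)^{1-n/p}$ on the $L^p$ term) yields
$$\|\tilde{u}\|_{L^\infty(B_{cR}^+({\bf x}_0))}\le C\Bigl[(c_0R)^{-(n+1)/2}\|\tilde{u}\|_{L^2(B_{c_0R}^+({\bf x}_0))} + (c_0R)^{1-n/p}\|(-\Delta)^{1/2}u\|_{L^p(B_{c_0R}'({\bf x}_0))}\Bigr].$$
Because $R\ge 1$ and $1-n/p<3/2$, the prefactor $(c_0R)^{-(n+1)/2}$ can be absorbed into the constant while $(c_0R)^{1-n/p}$ is majorised by $CR^{3/2}$; the inclusion $B_{c_0R}'({\bf x}_0)\subseteq B_R'({\bf x}_0)$ then delivers the $R^{3/2}\|(-\Delta)^{1/2}u\|_{L^p(B_R'({\bf x}_0))}$ term on the right-hand side of \eqref{eq:boundary-bulk1}.

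For step two I would apply \eqref{eq:recall-RW19-eqn19} with its free radius equal to the present $R$. Its left-hand side is then exactly $\|\tilde{u}\|_{L^2(B_{c_0R}^+({\bf x}_0))}$, and identifying $\lim\partial_{n+1}\tilde{u}$ with $-(-\Delta)^{1/2}u$ via \eqref{eq:fractional-Lap-DNmap} turns the Neumann factor on its right-hand side into $\|(-\Delta)^{1/2}u\|_{L^2(B_{16R}'({\bf x}_0))}$. Substituting this interpolation bound into the output of step one gives precisely \eqref{eq:boundary-bulk1}. The only genuine obstacle is bookkeeping: the scaling exponents $-(n+1)/2$ and $1-n/p$ inherited from the Schauder rescaling do not literally match the $R^{3/2}$ coefficient in \eqref{eq:boundary-bulk1}, but the hypotheses $R\ge 1$ and $p>n$ are exactly what allow each such power to be dominated by $CR^{3/2}$ at the cost of a constant depending only on $n$ and $p$.
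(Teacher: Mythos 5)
Your proposal is correct and takes essentially the same approach as the paper: rescale the De Giorgi--Nash--Moser bound \eqref{eq:Schauder-estimate} to control $\|\tilde{u}\|_{L^\infty(B_{cR}^+)}$ by a bulk $L^2$ norm plus the $L^p$ Neumann term, then feed that bulk $L^2$ norm into the three-ball interpolation \eqref{eq:recall-RW19-eqn19}, and absorb the harmless negative powers of $R$ using $R\ge 1$ and $1-n/p<3/2$. The paper merely presents the rescaling in one step (setting $\tilde{v}({\bf x})=\tilde{u}(R{\bf x})$, applying both estimates at unit scale, and unscaling at the end), whereas you rescale the Schauder estimate separately at radius $c_0R$; the identification $c=c_0/4$ and the bookkeeping of exponents are the same.
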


\begin{proof}
Without loss of generality, it suffices to take ${\bf x}_{0}=0$. Let $\tilde{v}({\bf x})=\tilde{u}(R{\bf x})$ 
and let $v({\bf x}')=u(R{\bf x}')$, we observe that 
\[
\begin{cases}
\Delta\tilde{v}=0 & \text{in}\;\;\mathbb{R}_{+}^{n+1},\\
\tilde{v}({\bf x}',0)=v({\bf x}') & \text{on}\;\;{\bf x}'\in\mathbb{R}^{n}.
\end{cases}
\]
From \eqref{eq:recall-RW19-eqn19} and \eqref{eq:Schauder-estimate}, it follows that 
\begin{align}
\|\tilde{v}\|_{L^{\infty}(B_{c}^{+})}& \le C\bigg(\|\tilde{v}\|_{L^{2}(B_{16}^{+})}+\|v\|_{L^{2}(B_{16}')}\bigg)^{\alpha}\bigg(\bigg\|\lim_{x_{n+1}\rightarrow0}\partial_{n+1}\tilde{v}\bigg\|_{L^{2}(B_{16}')}+\|v\|_{L^{2}(B_{16}')}\bigg)^{1-\alpha}\nonumber \\
 & \quad+\bigg\|\lim_{x_{n+1}\rightarrow0}\partial_{n+1}\tilde{v}\bigg\|_{L^{p}(B_{1}')}.\label{eq:Schauder2}
\end{align}
Note that 
\begin{align*}
\|\tilde{v}\|_{L^{\infty}(B_{c}^{+})} & =\|\tilde{u}\|_{L^{\infty}(B_{cR}^{+})},\\
\|\tilde{v}\|_{L^{2}(B_{16}^{+})} & =R^{-\frac{n+1}{2}}\|\tilde{u}\|_{L^{2}(B_{16R}^{+})},\\
\|v\|_{L^{2}(B_{16}')} & =R^{-\frac{n}{2}}\|u\|_{L^{2}(B_{16R}')},\\
\bigg\|\lim_{x_{n+1}\rightarrow0}\partial_{n+1}\tilde{v}\bigg\|_{L^{p}(B_{1}')} & =R^{1-\frac{n}{p}}\bigg\|\lim_{x_{n+1}\rightarrow0}\partial_{n+1}\tilde{u}\bigg\|_{L^{p}(B_{R}')},\quad p\ge2.
\end{align*}
Hence, \eqref{eq:Schauder2} becomes 
\begin{align*}
\|\tilde{u}\|_{L^{\infty}(B_{cR}^{+})}& \le CR^{-\frac{n}{2}}R^{-\frac{\alpha}{2}}\bigg(\|\tilde{u}\|_{L^{2}(B_{16R}^{+})}+R^{\frac{1}{2}}\|u\|_{L^{2}(B_{16R}')}\bigg)^{\alpha}\\
 & \quad\times R^{-\frac{1}{2}(1-\alpha)}\bigg(R^{\frac{3}{2}}\bigg\|\lim_{x_{n+1}\rightarrow0}\partial_{n+1}\tilde{u}\bigg\|_{L^{2}(B_{16R}')}+R^{\frac{1}{2}}\|u\|_{L^{2}(B_{16R}')}\bigg)^{1-\alpha}\\
 & \quad+R^{-\frac{1}{2}-\frac{n}{p}}R^{\frac{3}{2}}\bigg\|\lim_{x_{n+1}\rightarrow0}\partial_{n+1}\tilde{u}\bigg\|_{L^{p}(B_{R}')}.
\end{align*}
Since $R\ge1$, \eqref{eq:boundary-bulk1} follows immediately. 
\end{proof}

\section{\label{sec:Boundary-Bulk}Boundary decay to bulk decay}

In this section, we will establish that the boundary decay implies the bulk decay.   
\begin{prop}
\label{prop:boundary-bulk}Assume that \eqref{eq:assumption-1} and \eqref{eq:assumption-2} are satisfied. 
Let $u \in W^{2,p}(\mathbb{R}^{n})$ for some integer $n < p <\infty$ be a solution to \eqref{eq:Schrodinger} and the decay assumption \eqref{eq:decay-assumption} holds. Then 
\begin{equation}
|\tilde{u}({\bf x})|\le Ce^{-c|{\bf x}|}\quad\text{for }\;\;{\bf x}=({\bf x}',x_{n+1})\in\mathbb{R}_{+}^{n+1}.\label{eq:bulk-decay}
\end{equation}
\end{prop}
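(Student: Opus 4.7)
The plan is to apply the boundary--bulk inequality \eqref{eq:boundary-bulk1} at a suitable boundary centre $x_0=(x_0',0)$ with radius $R\ge 1$, and to control each factor on the right-hand side by combining the exponential boundary decay \eqref{eq:decay-assumption} of $u$ with the weighted integrability of $(-\Delta)^{1/2}u$ supplied by Lemma~\ref{lem:gradient-decay}.

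I would first record the global bound $\|\tilde{u}\|_{L^{\infty}(\mathbb{R}_{+}^{n+1})}\le\|u\|_{L^{\infty}(\mathbb{R}^{n})}\le\Lambda$, which follows from the Poisson representation of the harmonic extension (equivalently, from the maximum principle, noting that $u\in W^{2,p}\cap L^{\infty}$ via Sobolev embedding since $p>n$). This reduces the otherwise uncontrolled first factor of \eqref{eq:boundary-bulk1} to the harmless polynomial $\|\tilde{u}\|_{L^{2}(B_{16R}^{+}(x_{0}))}\le CR^{(n+1)/2}$ uniformly in $x_0$.

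Next, when $|x_0'|\ge 32R$ the disc $B_{16R}'(x_0)$ lies in $\{|y'|\ge|x_0'|/2\}$; the hypothesis \eqref{eq:decay-assumption} then gives $\|u\|_{L^{2}(B_{16R}'(x_{0}))}\le CR^{n/2}e^{-\lambda|x_0'|/2}$, and splitting $e^{-\lambda|y'|/2}\cdot e^{\lambda|y'|/2}$ against \eqref{eq:gradient-decay} produces $\|(-\Delta)^{1/2}u\|_{L^{2}(B_{16R}'(x_{0}))}\le Ce^{-\lambda|x_0'|/8}$ and $\|(-\Delta)^{1/2}u\|_{L^{p}(B_{R}'(x_{0}))}\le Ce^{-\lambda|x_0'|/(4p)}$. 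Inserting these into the interpolative $(\,\cdot\,)^{\alpha}(\,\cdot\,)^{1-\alpha}+(\,\cdot\,)$ structure of \eqref{eq:boundary-bulk1}, I would obtain
\[
\|\tilde{u}\|_{L^{\infty}(B_{cR}^{+}(x_{0}))}\le CR^{N}e^{-c_1|x_0'|}
\]
for some exponents $N=N(n,p,\alpha)$ and $c_1=c_1(\lambda,\alpha,p)>0$. For a bulk point $x=(x',x_{n+1})$ with $|x|$ large, I take $x_0=(x',0)$ and $R=\max\{1,x_{n+1}/c\}$ so that $x\in B_{cR}^{+}(x_0)$; on the cone $\{|x'|\gtrsim|x|\}$, the polynomial $R^{N}\lesssim|x|^{N}$ is absorbed by the exponential to yield \eqref{eq:bulk-decay}.

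The main obstacle is the complementary regime $|x'|\ll x_{n+1}$, where no boundary centre $x_0=(x_0',0)$ simultaneously satisfies $x\in B_{cR}^{+}(x_0)$ and $|x_0'|\ge 32R$ (since the geometric constant $c\in(0,1)$ in \eqref{eq:boundary-bulk1} is strictly less than $16$). I would close this gap by combining the decay already established on the horizontal strip $\{0\le y_{n+1}\le c\}\cap\{|y'|\gtrsim 1\}$ with a propagation-of-smallness argument (a three-ball / Hadamard-type estimate) for the harmonic function $\tilde{u}$, exploiting its uniform $L^{\infty}$ bound at large heights. Preserving the exponential rate through this propagation is where I expect the main technical difficulty to lie.
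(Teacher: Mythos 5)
Your proposal follows essentially the same route as the paper: both feed the exponential boundary decay of $u$ and the weighted bounds on $(-\Delta)^{1/2}u$ from Lemma~\ref{lem:gradient-decay} into the interpolation inequality \eqref{eq:boundary-bulk1} at boundary centres ${\bf x}_0$ with $|{\bf x}_0|\approx 32R$, and then propagate the resulting exponential decay on the near-boundary cone to all of $\mathbb{R}^{n+1}_{+}$ via a chain-of-balls argument. The propagation step you flag as the main remaining technical difficulty is handled in the paper simply by citing \cite[Proposition 2.2, Step 2]{RW19Landis}, and the paper controls $\|\tilde{u}\|_{L^{2}(B_{16R}^{+}({\bf x}_{0}))}$ via the $L^{2}$-contraction \eqref{eq:extension-problem-est} rather than your $L^{\infty}$ maximum-principle bound---a harmless variation since both yield a polynomial factor in $R$ that the exponential absorbs.
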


\begin{proof}
Given any $R\ge1$, choosing ${\bf x}_{0}\in\mathbb{R}^{n}\times\{0\}$
with $|{\bf x}_{0}|=32R$. By \eqref{eq:decay-assumption}, we have
\begin{equation}
\|u\|_{L^{2}(B_{16R}'({\bf x}_{0}))}\le Ce^{-cR}.\label{eq:boundary-to-bulk1}
\end{equation}
Furthermore, \eqref{eq:extension-problem-est} yields
\begin{equation}
\|\tilde{u}\|_{L^{2}(B_{16R}^{+}({\bf x}_{0}))}\le\|\tilde{u}\|_{L^{2}(\mathbb{R}^{n}\times(0,16R))}\le4R^{\frac{1}{2}}\|u\|_{L^{2}(\mathbb{R}^{n})}\le C(n,\lambda,\Lambda)R^{\frac{1}{2}}.\label{eq:boundary-to-bulk2}
\end{equation}
Plugging \eqref{eq:gradient-decay}, \eqref{eq:boundary-to-bulk1}
and \eqref{eq:boundary-to-bulk2} into \eqref{eq:boundary-bulk1} implies 
\[
\|\tilde{u}\|_{L^{\infty}(B_{cR}^{+}({\bf x}_{0}))}\le Ce^{-cR}.
\]
Following the chain of balls argument described in \cite[Proposition 2.2, Step 2]{RW19Landis},
we finally conclude our result. 
\end{proof}

\section{\label{sec:main}Proof of Theorem~{\rm \ref{thm:main1}}}

Recall the following Liouville-type theorem in \cite[Theorem B]{Armitage85Liouville}. 
\begin{thm}
\label{thm:Liouville}Suppose that $\Delta\tilde{u}=0$
in $\mathbb{R}_{+}^{n+1}$. If $\tilde{u}$ satisfies the decay property
\eqref{eq:bulk-decay}, then $\tilde{u}\equiv0$. 
\end{thm}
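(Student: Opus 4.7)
The plan is to prove Theorem \ref{thm:Liouville} via a partial Fourier transform in the tangential variable ${\bf x}'\in\mathbb{R}^{n}$. Using the elementary bound $|({\bf x}',x_{n+1})|\ge\tfrac{1}{\sqrt{2}}(|{\bf x}'|+x_{n+1})$, the exponential decay \eqref{eq:bulk-decay} implies
\[
\|\tilde{u}(\cdot,x_{n+1})\|_{L^{1}(\mathbb{R}^{n})}\le C_{1}e^{-c_{1}x_{n+1}}
\]
for some $c_{1}>0$. Hence the partial Fourier transform $\widehat{\tilde{u}}(\xi,x_{n+1})$ is well defined, continuous in both variables, and bounded by $C_{1}e^{-c_{1}x_{n+1}}$ uniformly in $\xi\in\mathbb{R}^{n}$.

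Applying the partial Fourier transform to $\Delta\tilde{u}=0$ converts it into the family of ODEs $\partial_{n+1}^{2}\widehat{\tilde{u}}(\xi,x_{n+1})=|\xi|^{2}\widehat{\tilde{u}}(\xi,x_{n+1})$. For each $\xi\ne0$ the general solution is
\[
\widehat{\tilde{u}}(\xi,x_{n+1})=A(\xi)e^{|\xi|x_{n+1}}+B(\xi)e^{-|\xi|x_{n+1}},
\]
while at $\xi=0$ it is affine in $x_{n+1}$. Dividing the bound $|\widehat{\tilde{u}}(\xi,x_{n+1})|\le C_{1}e^{-c_{1}x_{n+1}}$ by $e^{|\xi|x_{n+1}}$ and sending $x_{n+1}\to\infty$ kills the growing mode $A(\xi)$; the remaining inequality $|B(\xi)|\le C_{1}e^{(|\xi|-c_{1})x_{n+1}}$ then forces $B(\xi)=0$ for every $|\xi|<c_{1}$, and the affine mode at $\xi=0$ vanishes likewise.

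To upgrade this partial vanishing to $\widehat{\tilde{u}}\equiv0$, fix any $\delta>0$. The slice $\tilde{u}(\cdot,\delta)$ still satisfies $|\tilde{u}({\bf x}',\delta)|\le Ce^{-c|{\bf x}'|}$, so by the Paley--Wiener theorem in its exponential-decay form the transform
\[
\widehat{\tilde{u}}(\xi,\delta)=B(\xi)e^{-|\xi|\delta}
\]
extends holomorphically to the complex tube $\{\zeta\in\mathbb{C}^{n}:|\Im\,\zeta|<c\}$. Since the real-analytic restriction of this extension vanishes on the ball $\{|\xi|<c_{1}\}$, the identity theorem yields $\widehat{\tilde{u}}(\cdot,\delta)\equiv0$, hence $\tilde{u}(\cdot,\delta)\equiv0$. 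Because $\delta>0$ is arbitrary, we conclude $\tilde{u}\equiv0$ in $\mathbb{R}_{+}^{n+1}$.

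The main obstacle is the concluding Paley--Wiener / analytic-continuation step: one must verify carefully that the exponential decay of $\tilde{u}(\cdot,\delta)$ does produce a holomorphic extension of its Fourier transform to a genuine complex tube, and then invoke the identity theorem for real-analytic functions to spread the vanishing from a ball to all of $\mathbb{R}^{n}$. The remaining steps reduce to elementary linear-ODE analysis, and in particular no Carleman estimate or heavy harmonic-analytic machinery is required.
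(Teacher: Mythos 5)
Your argument is correct, and it is a genuinely different route from the one taken in the appendix of the paper. The paper's proof exploits the conformal structure: the Kelvin transform $\mathcal{K}$ maps $\mathbb{R}_+^{n+1}$ to the unit ball $B_1(0)$, the decay \eqref{eq:bulk-decay} at infinity becomes a decay of order $\exp(-c/|{\bf z}-{\bf s}|)$ of the harmonic function $\mathcal{K}[\tilde u]$ near the boundary point ${\bf s}$, and the conclusion then follows from Jin's unique continuation theorem \cite{Jin93UCP} for harmonic functions in a ball. Your proof instead uses the product structure of the half-space: a partial Fourier transform in ${\bf x}'$ reduces $\Delta\tilde u=0$ to the decoupled ODEs $\partial_{n+1}^2\widehat{\tilde u}=|\xi|^2\widehat{\tilde u}$; the uniform bound $|\widehat{\tilde u}(\xi,x_{n+1})|\le C_1 e^{-c_1 x_{n+1}}$ kills the growing mode $A(\xi)$ for all $\xi$ and kills $B(\xi)$ whenever $|\xi|<c_1$ (with the affine $\xi=0$ mode dying for the same reason); and finally the horizontal exponential decay of each slice $\tilde u(\cdot,\delta)$ makes $\widehat{\tilde u}(\cdot,\delta)$ the restriction to $\mathbb{R}^n$ of a function holomorphic on the tube $\{|\operatorname{Im}\zeta|<c'\}$, so vanishing on the real ball $\{|\xi|<c_1\}$ forces $\widehat{\tilde u}(\cdot,\delta)\equiv 0$ by the identity theorem. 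Both proofs are sound; yours is self-contained and elementary (no quoted UCP result needed, only the ODE solution formula and the standard holomorphic-extension consequence of exponential decay), while the paper's is shorter modulo the citation and fits its broader theme of reducing to known results about harmonic functions on the ball. One small point you may wish to make explicit when writing this up: to justify that $\widehat{\tilde u}(\xi,\cdot)$ is $C^2$ and satisfies the ODE, use the interior derivative estimates for harmonic functions to check that $\nabla'\tilde u$, ${\Delta'}\tilde u$ and $\partial_{n+1}^2\tilde u$ also decay exponentially in ${\bf x}'$ locally uniformly in $x_{n+1}>0$, so that the Fourier transform commutes with both $\Delta'$ and $\partial_{n+1}^2$.
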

\noindent It is obvious that Theorem~\ref{thm:main1} is an easy consequence of Proposition~\ref{prop:boundary-bulk} and Theorem~\ref{thm:Liouville}. We now say a few words about the proof of Theorem~\ref{thm:main2}. As Proposition~\ref{prop:boundary-bulk}, the boundary decay \eqref{cx} implies the bulk decay \eqref{eq:bulk-decay}. In the case of ${\bf b}\equiv 0$, the proof of Proposition~\ref{prop:boundary-bulk} remains true when $q$ is bounded. 

To make the paper self-contained, we will give another proof of Theorem~\ref{thm:Liouville} in Appendix~\ref{sec:Appendix1}.

\appendix

\section{\label{sec:Appendix1}Proof of Theorem~\ref{thm:Liouville}}

First of all, we introduce a mapping from the ball to the upper half-space,
and back, which preserves the Laplacian. For convenience, we define
\[
{\bf x}^{*}:=\frac{{\bf x}}{|{\bf x}|^{2}}\quad\text{for }\,\,{\bf x}\in\mathbb{R}^{n+1}\setminus\{0\},
\]
see e.g. \cite{ABR01Harmonic}. Let ${\bf s}=(0,\cdots,0,-1)$
be the south pole of the unit sphere $\mathcal{S}^{n}$, and we define
\[
\Phi({\bf z}):=2({\bf z}-{\bf s})^{*}+{\bf s}=\frac{(2{\bf z}',1-|{\bf z}|^{2})}{|{\bf z}'|^{2}+(1+z_{n+1})^{2}}
\]
for all ${\bf z}=({\bf z}',z_{n+1})\in\mathbb{R}^{n+1}\setminus\{{\bf s}\}$.
It is easy to see that $\Phi^{2}={\rm Id}$. 

Let $B_{1}(0)$ be the unit ball in $\mathbb{R}^{n+1}$. The following
lemma can be found in \cite{ABR01Harmonic}. 
\begin{lem}
\label{lem:conformal}The mapping $\Phi:\mathbb{R}^{n+1}\setminus\{{\bf s}\}\rightarrow\mathbb{R}^{n+1}\setminus\{{\bf s}\}$
is injective. Furthermore, it maps $B_{1}(0)$ onto $\mathbb{R}_{+}^{n+1}$,
and maps $\mathbb{R}_{+}^{n+1}$ onto $B_{1}(0)$. It also maps $\mathcal{S}^{n}\setminus\{{\bf s}\}$ onto ${\mathbb R}^n$ and maps ${\mathbb R}^n$ onto $\mathcal{S}^{n}\setminus\{{\bf s}\}$.
\end{lem}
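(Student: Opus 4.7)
The plan is to recognize $\Phi$ as a sphere inversion, compute the last coordinate of $\Phi(\mathbf{z})$ explicitly, and then combine a sign analysis with the self-inverse property $\Phi\circ\Phi=\mathrm{Id}$ to deduce all four mapping statements in one stroke. The structural observation is that $\Phi(\mathbf{z})=2(\mathbf{z}-\mathbf{s})^{*}+\mathbf{s}$ is the conjugate, under the translation $\mathbf{z}\mapsto\mathbf{z}-\mathbf{s}$, of the model inversion $g(\mathbf{w}):=2\mathbf{w}/|\mathbf{w}|^{2}$. A one-line calculation using $|g(\mathbf{w})|^{2}=4/|\mathbf{w}|^{2}$ shows $g\circ g=\mathrm{Id}$, and hence $\Phi\circ\Phi=\mathrm{Id}$ on $\mathbb{R}^{n+1}\setminus\{\mathbf{s}\}$; in particular $\Phi$ is a bijection from its domain to itself, giving injectivity.

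The main computation is to read off the last coordinate of $\Phi(\mathbf{z})$ from the formula displayed in the paper,
\[
[\Phi(\mathbf{z})]_{n+1}=\frac{1-|\mathbf{z}|^{2}}{|\mathbf{z}'|^{2}+(1+z_{n+1})^{2}}.
\]
The denominator equals $|\mathbf{z}-\mathbf{s}|^{2}$, which is strictly positive on $\mathbb{R}^{n+1}\setminus\{\mathbf{s}\}$, so the sign of $[\Phi(\mathbf{z})]_{n+1}$ is exactly the sign of $1-|\mathbf{z}|^{2}$. This yields immediately the two inclusions $\Phi(B_{1}(0))\subset\mathbb{R}_{+}^{n+1}$ and $\Phi(\mathcal{S}^{n}\setminus\{\mathbf{s}\})\subset\mathbb{R}^{n}\times\{0\}$.

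For the reverse directions I would invoke involutivity: for any $\mathbf{w}\in\mathbb{R}_{+}^{n+1}$, set $\mathbf{z}:=\Phi(\mathbf{w})$, so that $\mathbf{w}=\Phi(\mathbf{z})$ by Step one; then $w_{n+1}>0$ and the sign analysis force $1-|\mathbf{z}|^{2}>0$, so $\mathbf{z}\in B_{1}(0)$. An identical argument with $w_{n+1}=0$ in place of $w_{n+1}>0$ gives $\Phi(\mathbb{R}^{n}\times\{0\})\subset\mathcal{S}^{n}\setminus\{\mathbf{s}\}$. Together with the two inclusions from Step two and $\Phi^{2}=\mathrm{Id}$, this promotes all containments to equalities and finishes the lemma.

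I do not anticipate a substantive obstacle; the only points needing a line of bookkeeping are checking that the denominator vanishes only at $\mathbf{z}=\mathbf{s}$ (so that the domain really is $\mathbb{R}^{n+1}\setminus\{\mathbf{s}\}$) and that $\mathbf{s}$ is never attained as a value of $\Phi$, which follows once more from involutivity since otherwise $\mathbf{s}$ would lie in the domain. As a geometric sanity check, the restriction of $\Phi$ to $\mathcal{S}^{n}\setminus\{\mathbf{s}\}$ simplifies to $\mathbf{z}\mapsto\mathbf{z}'/(1+z_{n+1})$, which is precisely the classical stereographic projection from the south pole onto the equatorial hyperplane.
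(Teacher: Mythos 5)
The paper offers no proof of this lemma; it simply cites \cite{ABR01Harmonic}. Your argument is a correct, self-contained replacement: writing $\Phi$ as the translate of the model inversion $g(\mathbf{w})=2\mathbf{w}/|\mathbf{w}|^{2}$ gives $\Phi\circ\Phi=\mathrm{Id}$, the explicit last coordinate $[\Phi(\mathbf{z})]_{n+1}=(1-|\mathbf{z}|^{2})/|\mathbf{z}-\mathbf{s}|^{2}$ gives the inclusions by a sign check, and involutivity upgrades them to equalities. The computations check out, including the identification with stereographic projection on $\mathcal{S}^{n}\setminus\{\mathbf{s}\}$.

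One small wording issue: you justify that $\mathbf{s}$ is never a value of $\Phi$ ``from involutivity,'' but the composition $\Phi\circ\Phi$ is only well-defined \emph{after} one knows $\Phi$ maps $\mathbb{R}^{n+1}\setminus\{\mathbf{s}\}$ into itself, so as written this is circular. It is repaired in one line by arguing directly at the level of $g$: since $|g(\mathbf{w})|=2/|\mathbf{w}|>0$, the map $g$ never takes the value $0$, hence $\Phi=T_{\mathbf{s}}\circ g\circ T_{-\mathbf{s}}$ never takes the value $\mathbf{s}$; this shows $\Phi$ sends $\mathbb{R}^{n+1}\setminus\{\mathbf{s}\}$ into itself, and only then does $\Phi\circ\Phi=\mathrm{Id}$ make sense. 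With that reordering the proof is complete.
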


Given any function $w$ defined on a domain $\Omega$ in $\mathbb{R}^{n+1}\setminus\{{\bf s}\}$.
The Kelvin transform $\mathcal{K}[w]$ of $w$ is defined by 
\begin{equation}
\mathcal{K}[w]({\bf z}):=2^{\frac{n-1}{2}}|{\bf z}-{\bf s}|^{1-n}w(\Phi({\bf z}))\quad\text{for all}\,\,{\bf z}\in\Phi(\Omega).\label{eq:Kelvin}
\end{equation}
The following lemma can be found in \cite{ABR01Harmonic},
which exhibits a crucial property of the Kelvin transform. 
\begin{lem}
\label{lem:harmonic}Let $\Omega$ be any domain in $\mathbb{R}^{n+1}\setminus\{{\bf s}\}$.
Then $u$ is harmonic on $\Omega$ if and only if $\mathcal{K}[u]$
is harmonic on $\Phi(\Omega)$. 
\end{lem}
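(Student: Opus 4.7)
The plan is to reduce the claim to the classical Kelvin transform about the origin in $\mathbb{R}^{n+1}$ by decomposing $\Phi$ as a sequence of elementary transformations, each of which has a well-understood action on the Laplacian. Writing
\[
\Phi({\bf z}) = 2\psi({\bf z}-{\bf s}) + {\bf s}, \qquad \psi({\bf y}) := {\bf y}/|{\bf y}|^{2},
\]
one sees that $\Phi$ is the composition of a translation by $-{\bf s}$, the standard inversion $\psi$ at the origin, a dilation by $2$, and a translation by $+{\bf s}$. Among these, only the inversion interacts nontrivially with the Laplacian.

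Next I would invoke the classical Kelvin identity in ambient dimension $n+1$ (a direct computation in polar coordinates, see for instance Chapter~4 of \cite{ABR01Harmonic}): the standard Kelvin transform $K_{0}[v]({\bf y}) := |{\bf y}|^{1-n}\,v(\psi({\bf y}))$ satisfies
\[
\Delta_{\bf y}\bigl(|{\bf y}|^{1-n}\,v(\psi({\bf y}))\bigr) = |{\bf y}|^{-n-3}\,(\Delta v)(\psi({\bf y})) \qquad \text{on }\mathbb{R}^{n+1}\setminus\{0\},
\]
so $K_{0}[v]$ is harmonic on $\psi(\Omega')$ if and only if $v$ is harmonic on $\Omega'$.

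To connect $\mathcal{K}$ to $K_{0}$, I set $v({\bf y}) := u(2{\bf y}+{\bf s})$. A direct substitution gives
\[
K_{0}[v]({\bf z}-{\bf s}) = |{\bf z}-{\bf s}|^{1-n}\,u\bigl(2\psi({\bf z}-{\bf s})+{\bf s}\bigr) = |{\bf z}-{\bf s}|^{1-n}\,u(\Phi({\bf z})),
\]
and hence $\mathcal{K}[u]({\bf z}) = 2^{(n-1)/2}\,K_{0}[v]({\bf z}-{\bf s})$. One further checks that if $u$ is defined on $\Omega\subset\mathbb{R}^{n+1}\setminus\{{\bf s}\}$, then the right-hand side is defined precisely on $\Phi(\Omega)$, using $\psi\bigl((\Omega-{\bf s})/2\bigr) = 2\psi(\Omega-{\bf s})$. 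Since harmonicity is preserved under translations (trivially), under the dilation $v({\bf y})=u(2{\bf y}+{\bf s})$ (chain rule: $\Delta_{\bf y}u(2{\bf y}+{\bf s}) = 4(\Delta u)(2{\bf y}+{\bf s})$), and under multiplication by the constant $2^{(n-1)/2}$, chaining the equivalences along the decomposition of $\Phi$ yields $u$ harmonic on $\Omega$ $\Longleftrightarrow$ $\mathcal{K}[u]$ harmonic on $\Phi(\Omega)$.

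The main obstacle is the bookkeeping of constants: one must verify that the precise factor $2^{(n-1)/2}$ and the exponent $1-n$ in the definition \eqref{eq:Kelvin} are exactly those for which the dilation by $2$ appearing inside $\Phi$ is absorbed cleanly, leaving no residual ${\bf z}$-dependent multiplier to obstruct the reduction to $K_{0}$. Once that is in place, the ``only if'' direction follows from the ``if'' direction applied to $\mathcal{K}[u]$ in place of $u$, since $\Phi^{2}=\mathrm{Id}$ together with $|\Phi({\bf z})-{\bf s}| = 2/|{\bf z}-{\bf s}|$ gives $\mathcal{K}^{2}=\mathrm{Id}$ (the factor $2^{n-1}$ from the two Kelvin prefactors cancels against $|\Phi({\bf z})-{\bf s}|^{1-n}=(2/|{\bf z}-{\bf s}|)^{1-n}$).
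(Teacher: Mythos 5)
Your proof is correct. Note that the paper does not actually prove Lemma~\ref{lem:harmonic}; it simply cites \cite{ABR01Harmonic}, so your argument supplies the missing details, and it is the standard one: decompose $\Phi$ into translations, a dilation, and the inversion $\psi$, and reduce to the classical Kelvin identity $\Delta_{\bf y}\bigl(|{\bf y}|^{2-N}v(\psi({\bf y}))\bigr)=|{\bf y}|^{-N-2}(\Delta v)(\psi({\bf y}))$ in ambient dimension $N=n+1$, where $2-N=1-n$ and $-N-2=-n-3$ match your exponents. Your substitution $v({\bf y})=u(2{\bf y}+{\bf s})$, the identity $\mathcal{K}[u]({\bf z})=2^{(n-1)/2}K_{0}[v]({\bf z}-{\bf s})$, and the domain bookkeeping $\psi\bigl((\Omega-{\bf s})/2\bigr)=2\psi(\Omega-{\bf s})$ all check out, and the pointwise identity gives both implications at once since $\psi$ is a bijection onto its image. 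One small clarification: the factor $2^{(n-1)/2}$ is irrelevant to harmonicity (any constant multiple of a harmonic function is harmonic), and the dilation by $2$ never threatens to produce a ${\bf z}$-dependent multiplier; the only normalization that matters for the lemma is the exponent $1-n$ attached to the inversion. The constant $2^{(n-1)/2}$ is chosen solely so that $\mathcal{K}^{2}=\mathrm{Id}$, which you verify correctly at the end via $|\Phi({\bf z})-{\bf s}|=2/|{\bf z}-{\bf s}|$.
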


Now, we are ready to prove Theorem~\ref{thm:Liouville}. 
\begin{proof}
[Proof of Theorem~{\rm \ref{thm:Liouville}}] To begin, it is not hard to compute
\begin{align*}
|\Phi({\bf z})| & =\frac{\sqrt{4|{\bf z}'|^{2}+(|{\bf z}|^{2}-1)^{2}}}{|{\bf z}'|^{2}+(1+z_{n+1})^{2}}=\bigg|\frac{2|{\bf z}'|+i((-z_{n+1})^{2}+|{\bf z}'|^{2}-1)}{(-z_{n+1}-1)^{2}+|{\bf z}'|^{2}}\bigg|\\
 & =\bigg|\frac{(-z_{n+1}+1)+i|{\bf z}'|}{(-z_{n+1}-1)+i|{\bf z}'|}\bigg|.
\end{align*}
The decay assumption \eqref{eq:bulk-decay} implies that for ${\bf z}$ near the south pole ${\bf s}$,
\begin{align*}
|\mathcal{K}[\tilde{u}]({\bf z})| & =2^{\frac{n-1}{2}}|{\bf z}-{\bf s}|^{1-n}|\tilde{u}(\Phi({\bf z}))| \le C|{\bf z}-{\bf s}|^{1-n}e^{-c|\Phi({\bf z})|}\\
 & =C|{\bf z}-{\bf s}|^{1-n}\exp\bigg(-c\bigg|\frac{(-z_{n+1}+1)+i|{\bf z}'|}{(-z_{n+1}-1)+i|{\bf z}'|}\bigg|\bigg)\\
 & \le C|{\bf z}-{\bf s}|^{1-n}\exp\bigg(-c\frac{1}{|{\bf z}-{\bf s}|}\bigg)\\
 & \approx C\exp\bigg(-c\frac{1}{|{\bf z}-{\bf s}|}\bigg).
\end{align*}
From Lemma~\ref{lem:harmonic}, we know that $\mathcal{K}[\tilde{u}]$ is harmonic on $B_{1}(0)$.
By \cite[Theorem 1]{Jin93UCP}, we obtain that $\mathcal{K}[\tilde{u}]\equiv0$.
In view of \eqref{eq:Kelvin} and Lemma~\ref{lem:conformal}, we then conclude that
$\tilde{u}\equiv0$ in $\Phi(B_{1})=\mathbb{R}_{+}^{n+1}$.
\end{proof}

\section*{Acknowledgments}

Kow is partially supported by the Academy of Finland (Centre of Excellence in Inverse Modelling and Imaging, 312121) and by the European Research Council under Horizon 2020 (ERC CoG 770924). Wang is partially supported by MOST 108-2115-M-002-002-MY3 and MOST 109-2115-M-002-001-MY3. 

\end{sloppypar}

\bibliographystyle{custom}
\bibliography{ref}
\end{document}